\documentclass[a4paper,10pt,reqno, english]{amsart}

\usepackage{amsmath,amssymb,amscd,amsthm,amsfonts}
\usepackage{graphicx,subfigure}
\usepackage[pdfencoding=auto]{hyperref}
\usepackage{dsfont}
\usepackage[dvipsnames]{xcolor}
\usepackage[utf8]{inputenc}
\usepackage{microtype}
\usepackage[alphabetic]{amsrefs}
\usepackage[capitalize]{cleveref}
\usepackage[pdfencoding=auto]{hyperref}

\newtheorem{theorem}{Theorem}[section]

\newtheorem{lemma}[theorem]{Lemma}

\newtheorem{conjecture}[theorem]{Conjecture}

\newtheorem{problem}[theorem]{Problem}
\newtheorem{definition}{Definition}

\newcommand{\rr}{\mathds{R}}
\newcommand{\zz}{\mathds{Z}}

\DeclareMathOperator{\hs}{HS}

\DeclareMathOperator{\vol}{vol}

\title{Bisections of mass assignments using flags of affine spaces}
\author{Ilani Axelrod-Freed}
\address{Massachusetts Institute of Technology, Cambridge, MA 02139}
\email{ilani\_af@mit.edu}

\author{Pablo Sober\'on}

\address{Baruch College, City University of New York.  New York, NY 10010}
\email{pablo.soberon-bravo@baruch.cuny.edu}

\thanks{This project was done as part of the 2021 New York Discrete Mathematics REU, funded by NSF grant DMS 2051026.  Sober\'on’s research is supported by NSF grant DMS 2054419 and a PSC-CUNY TRADB52 award.}
\date{}

\begin{document}

\begin{abstract}
We use recent extensions of the Borsuk--Ulam theorem for Stiefel manifolds to generalize the ham sandwich theorem to mass assignments.  A $k$-dimensional mass assignment continuously imposes a measure on each $k$-dimensional affine subspace of $\mathbb{R}^d$.  Given a finite collection of mass assignments of different dimensions, one may ask if there is some sequence of affine subspaces $S_{k-1} \subset S_k \subset \ldots \subset S_{d-1} \subset \mathbb{R}^d$ such that $S_i$ bisects all the mass assignments on $S_{i+1}$ for every $i$. We show it is possible to do so whenever the number of mass assignments of dimensions $(k,\ldots,d)$ is a permutation of $(k,\ldots,d)$.  We extend previous work on mass assignments and the central transversal theorem.  We also study the problem of halving several families of $(d-k)$-dimensional affine spaces of $\mathbb{R}^d$ using a $(k-1)$-dimensional affine subspace contained in some translate of a fixed $k$-dimensional affine space.  For $k=d-1$, there results can be interpreted as dynamic ham sandwich theorems for families of moving points.
\end{abstract}

\maketitle

\section{Introduction}

Mass partition problems study how one can split finite sets of points or measures in Euclidean spaces given some geometric constraints.  The methods required to solve such problems are often topological, as they involve the study of continuous equivariant maps between associated topological spaces.  These problems have motivated significant research in algebraic topology, discrete geometry, and computational geometry \cites{matousek2003using, Zivaljevic2017, RoldanPensado2021}.  The classic example of a mass partition result is the ham sandwich theorem, conjectured by Steinhaus and proved by Banach \cite{Steinhaus1938}.

\begin{theorem}[Ham sandwich theorem]
	Let $d$ be a positive integer and $\mu_1, \ldots, \mu_d$ be finite measures of $\rr^d$ so that every hyperplane has measure zero for each $\mu_i$.  Then, there exists a hyperplane $S_{d-1}$ of $\rr^d$ so that its two closed half-spaces $S_{d-1}^+$ and $S_{d-1}^-$ satisfy
	\[
	\mu_i (S_{d-1}^+) = \mu_i (S_{d-1}^-) \qquad \mbox{for all }i=1,\ldots, d.
	\]
\end{theorem}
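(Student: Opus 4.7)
The plan is to deduce the theorem from the Borsuk--Ulam theorem, which states that every continuous odd map $S^d \to \rr^d$ has a zero. The strategy is to build a configuration space of closed half-spaces parametrized by $S^d$, turn the $d$ measure-conditions into a continuous $\rr^d$-valued test map, arrange for antipodality, and read off the bisecting hyperplane from a zero of that map.

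For the parametrization, I would embed $\rr^d$ as the affine hyperplane $\rr^d \times \{1\} \subset \rr^{d+1}$, and to each unit vector $u = (u_0, u_1, \ldots, u_d) \in S^d$ associate the closed half-space
\[
H_u^+ = \{x \in \rr^d : u_1 x_1 + \cdots + u_d x_d \ge u_0\},
\]
together with its complementary closed half-space $H_u^-$ (so that $H_u^+ \cap H_u^-$ is the hyperplane $S_{d-1}(u) = \{x : u_1 x_1 + \cdots + u_d x_d = u_0\}$ when $(u_1,\ldots,u_d)\neq 0$, and otherwise is all of $\rr^d$ or empty). Antipodality is baked in: $H_{-u}^+ = H_u^-$. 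I would then define $f : S^d \to \rr^d$ by
\[
f_i(u) = \mu_i(H_u^+) - \tfrac{1}{2}\mu_i(\rr^d), \qquad i = 1, \ldots, d.
\]

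The hypothesis that every hyperplane has measure zero guarantees that $\mu_i(H_u^+) + \mu_i(H_u^-) = \mu_i(\rr^d)$ for all $u$, so $f(-u) = -f(u)$, even in the degenerate cases $u = (\pm 1, 0, \ldots, 0)$ where one of the half-spaces is the whole $\rr^d$ or $\emptyset$. Borsuk--Ulam then produces some $u \in S^d$ with $f(u) = 0$, meaning $\mu_i(H_u^+) = \mu_i(H_u^-) = \tfrac{1}{2}\mu_i(\rr^d)$ for all $i$; the associated hyperplane $S_{d-1}(u)$ is the one claimed. Note the degenerate case cannot produce a zero unless some $\mu_i = 0$, in which case the theorem is trivial for that $i$ and we can replace it by any nonzero measure to run the argument; this ensures the $u$ supplied by Borsuk--Ulam indeed has $(u_1,\ldots,u_d)\neq 0$ and defines an honest hyperplane.

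The main technical point to verify is the continuity of $f$, which reduces to showing that $u \mapsto \mu_i(H_u^+)$ is continuous on $S^d$. This follows from dominated convergence once one observes that $\mathbf{1}_{H_{u_n}^+} \to \mathbf{1}_{H_u^+}$ pointwise off the boundary hyperplane $S_{d-1}(u)$, and that this boundary has $\mu_i$-measure zero by hypothesis. Aside from this analytic check and the handling of the degenerate antipodal pair, the rest of the argument is a direct invocation of Borsuk--Ulam.
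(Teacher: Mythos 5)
Your proof is correct and is exactly the standard Borsuk--Ulam argument via a $d$-sphere parametrization of (degenerate and non-degenerate) affine hyperplanes that the paper alludes to in the sentence immediately following the theorem statement; the paper itself does not reproduce the proof, only cites it. One small imprecision: a degenerate pole $u=(\pm1,0,\ldots,0)$ is a zero of $f$ only if \emph{every} $\mu_i$ is the zero measure (not merely some $\mu_i$), since at such $u$ one has $f_i(u)=\mp\tfrac12\mu_i(\rr^d)$ simultaneously for all $i$; your fallback (replace any zero measures and rerun the argument, or note that the conclusion is vacuous when all measures vanish) handles this, but the phrasing should say ``unless all $\mu_i=0$''. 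It is worth noting that the paper's own machinery gives a second route: Theorem~\ref{fairy bread sandwich} with $k=d$ parametrizes by $V_1(\rr^d)=S^{d-1}$, fixes the hyperplane's offset implicitly by demanding that it bisect one distinguished measure, and tests only the remaining $d-1$ measures, yielding a $\zz_2$-equivariant map $S^{d-1}\to\rr^{d-1}$; the advantage there is that the same scheme iterates cleanly over flags, whereas your $S^d$ parametrization is the more classical and self-contained version for the single-hyperplane case.
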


The proof of the ham sandwich theorem uses the Borsuk--Ulam theorem since the space of partitions can be parametrized with a $d$-dimensional sphere.  As we modify the mass partition problems, we may need to study maps between more elaborate topological spaces, such as direct products of spheres \cites{ManiLevitska2006, Blagojevic:2018jc, Hubard2020}, hyperplane arrangements \cites{Blagojevic2018, Blagojevic:2019hh}, and configuration spaces of $k$ points in $\rr^d$ \cites{Soberon:2012kp, Karasev:2014gi, Blagojevic:2014ey, Akopyan:2018tr} among others.  The topological techniques needed to solve such problems range from simple Borsuk--Ulam type theorems to obstruction theory, explicit computation of characteristic classes in cohomology, or the use of topological invariants such as the Fadell-Husseini index \cite{Fadell:1988tm}.  The results in this manuscript use affine Grassmanians and Stiefel manifolds, yet the proof methods rely on simple topological tools.

Several results have been proven recently regarding mass assignments.  Intuitively, a mass assignment on the $k$-dimensional affine subspaces of $\rr^d$ is a way to continuously assign a measure on each $k$-dimensional affine subspaces of $\rr^d$.

\begin{definition}
  Let $k \le d$ be positive integers and $A_k(\rr^d)$ be the set of $k$-dimensional affine subspaces of $\rr^d$.  For $V \in A_{k}(R^d)$,  let $M_{k}(V)$ be the space of finite measures on $V$ absolutely continuous with respect to the Lebesgue measure in $V$, equipped with the weak topology. Let $E=\{(V,\mu^V): V \in A_k(\rr^d), \ \mu^V \in M_{k}(V) \}$. Consider fibre bundle induced by the function $ g:E \to A_{k}(\rr^d) $ such that $g(V,\mu^V) = V$. 
  
  A $k$-dimensional mass assignment $\mu$ is a section of this fibre bundle.  Note that a $d$-dimensional mass assignment in $\rr^d$ is simply a finite measure absolutely continuous with respect to the Lebesgue measure.  Given a $k$-dimensional mass assignment $\mu$ and a $k$-dimensional affine subspace $V$ of $\rr^d$, we denote by $\mu^V$ the measure that $\mu$ induces on $V$.
\end{definition}

 Simple examples of mass assignments include the projection of a fixed measure onto $k$-dimensional affine subspaces or the $k$-dimensional volume of the intersection of $k$-dimensional spaces with a fixed object in $\rr^d$.  The use of mass assignments in mass partition problems was started by Schnider, motivated by a problem of splitting families of lines in $\rr^3$ \cites{Schnider:2020kk, Pilz2021}.

\begin{figure}
    \centering
    \includegraphics[width=0.7\textwidth]{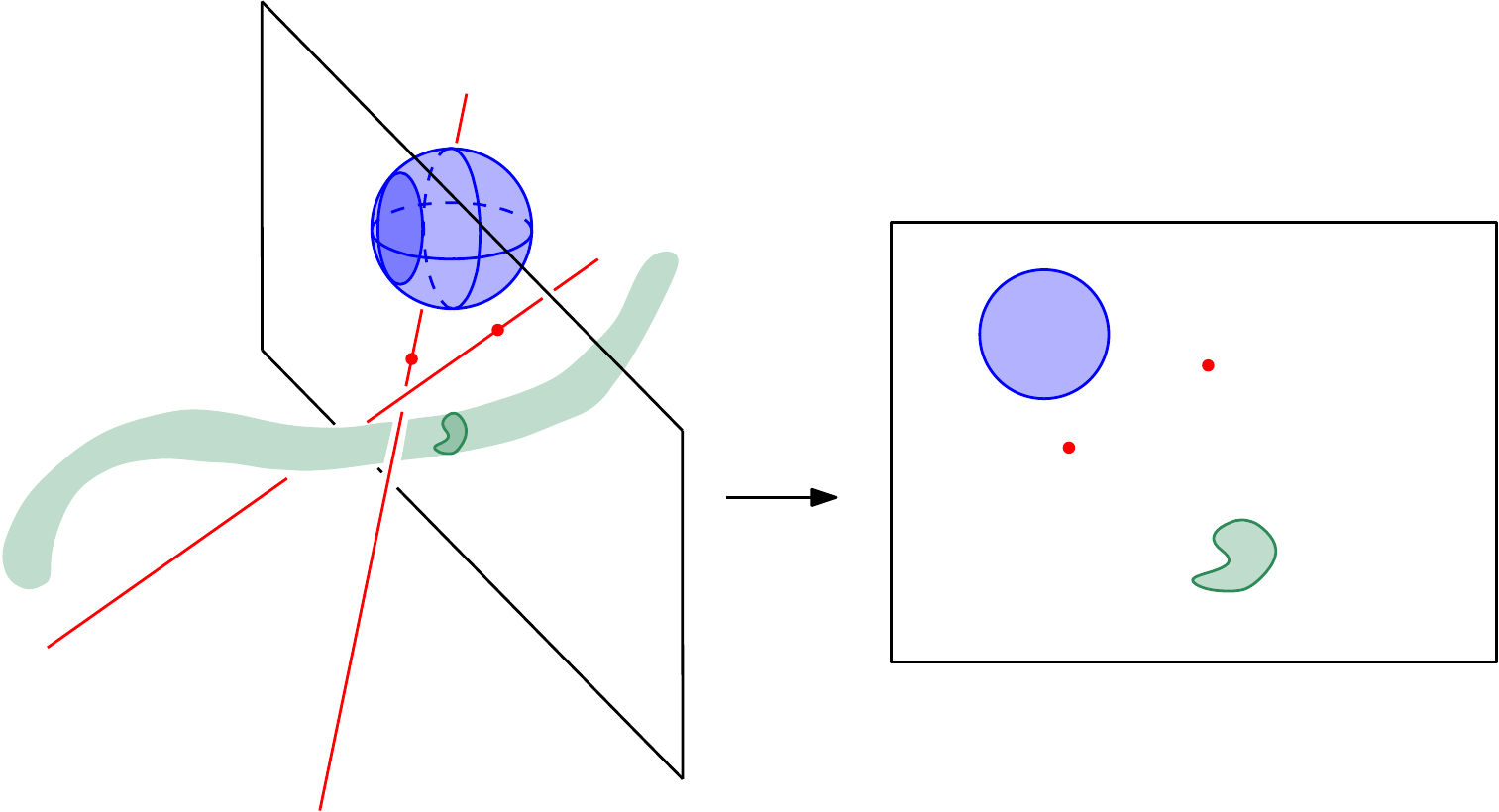}
    \caption{An example of three mass assignments on planes in $\rr^3$, given by intersections of planes with different objects.  The assignment induced by two lines fails the absolute continuity requirement, but provides good intuition.  We are interested in finding a plane where the three induced measures may be simultaneously halved by a line.}
    \label{fig:intro}
\end{figure}

In this paper, we are concerned with splitting mass assignments of several dimensions simultaneously, which we do with full flags of affine subspaces.  A full flag of affine subspaces of $\rr^d$ is a sequence $S_{k-1} \subset S_{k} \subset \ldots \subset S_{d-1} \subset S_d = \rr^d$ where $S_i$ is an $i$-dimensional affine subspace of $\rr^d$ of each $i$ for each $k-1 \le i \le d-1$.  Given several families of mass assignments  of different dimensions, we are interested in finding a full flag where $S_i$ halves each mass assignment on $S_{i+1}$, for $i=k-1,\ldots, d-1$.

Suppose that for $i=k,k+1,\ldots, d$ we have $i$ mass assignments defined on the $i$-dimensional affine subspaces of $\rr^d$.  In particular, we have $d$ measures in $\rr^d$. We can apply the ham sandwich theorem to find a hyperplane $S_{d-1}$ splitting the $d$ measures. Each $(d-1)$-dimensional mass assignment gives us a measure in $S_{d-1}$.  We can apply the ham sandwich theorem again to find $S_{d-2}$, a hyperplane of $S_{d-1}$ which halves each of those $d-1$ measures. Repeated applications of the ham sandwich theorem in this fashion allow us to find a full flag of affine subspaces that simultaneously bisect all the mass assignments. Our main theorem shows that we can permute the number of mass assignments of different dimensions and guarantee the existence of a partition as above.

We state our result on a slightly more general framework where we use arbitrary continuous functions evaluated on half-spaces instead of measures.  Such extensions of mass partition problems have been used recently for the Nandakumar--Ramana-Rao problem \cites{Karasev:2014gi, Blagojevic:2014ey, Akopyan:2018tr}.

\begin{definition}
Let $k \le d$ be non-negative integers and $\hs_k(\rr^d)$ be the space of closed half-spaces of $k$-dimensional affine subspaces of $\rr^d$. An assignment of continuous functions to $k$-dimensional half-spaces is a continuous function $f:\hs_k(\rr^d) \rightarrow \rr$.
\end{definition}

Let $S_{k-1} \subset S_k$ be affine subspaces of $\rr^d$ of dimensions $(k-1)$ and $k$, respectively, and $f$ be an assignment of continuous functions to $k$-dimensional half-spaces.  We say that $S_{k-1}$ bisects $f$ on $S_k$ if, for the two separate closed half-spaces $S_{k-1}^+$ and $S_{k-1}^-$ of $S_k$ on either side of $S_{k-1}$, we have that $f(S_{k-1}^+)=f(S_{k-1}^-)$.  Every $k$-dimensional mass assignment $\mu$ is a particular assignment of continuous functions to $k$-dimensional half-spaces in $\rr^d$.

The following is our main theorem. We nickname it the Fairy Bread Sandwich theorem because one interpretation is splitting a sandwich with multiple base ingredients and multiple colors of sprinkles. Once we make a cut that halves all the base ingredients, we care about the location of the sprinkles in the planar cut.

\begin{theorem}[Fairy Bread Sandwich Theorem]
\label{fairy bread sandwich}
Let $k, d$ be non-negative integers such that $k \le d$ and $\pi=(\pi_{d-1}, \ldots, \pi_{k-1})$ be a permutation of $(d-1,\ldots, k-1)$.  For $i=k-1,\ldots, d-1$, let $\mu_i$ be an $(i+1)$-dimensional mass assignment in $\rr^d$ and $\mathcal{F}_i$ be a set of $\pi_i$ assignments of continuous functions on $(i+1)$-dimensional half-spaces.
There exist affine subspaces $S_{k-1}\subset\ldots\subset S_{d-1} \subset S_d = \rr^d$ such that $S_i$ is an $i$-dimensional affine space of $S_{i+1}$ that bisects $\mu_i$ and all the functions in $\mathcal{F}_i$.

\end{theorem}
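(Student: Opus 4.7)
The plan is to reduce the problem to finding a zero of a single equivariant map from an iterated sphere bundle to a Euclidean space, and then invoke a Borsuk--Ulam-type theorem for Stiefel manifolds (as alluded to in the introduction). First, I would parametrize the oriented full flags as follows. Given a flag $S_{k-1} \subset \cdots \subset S_{d-1} \subset \rr^d$ together with a choice of positive half-space at each level, associate vectors $V_i = (w_i, -b_i) \in \rr^{d+1}$ for $i=k-1,\ldots,d-1$, where $w_i$ is normal to $S_i$ inside $S_{i+1}$ and points into $S_i^+$, and $b_i$ is the signed distance of $S_i$ from the origin along $w_i$. After normalizing $|V_i|^2=1$, each $V_i$ lies on a sphere; the orthogonality relations $w_i\perp w_j$ (which are forced by $w_i$ being tangent to $S_{i+1}$) mean that once $V_{d-1},\ldots,V_{i+1}$ are chosen, $V_i$ is constrained to lie on an $(i+1)$-sphere. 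The parameter space $X$ is then an iterated sphere bundle over a point with fibers $S^d,S^{d-1},\ldots,S^k$, and the half-space $S_i^+$ depends continuously on $(V_{d-1},\ldots,V_i)$, extending continuously to the degenerate cases $w_i=0$.

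Second, I would assemble all bisection defects into a single test map $\Psi:X\to\rr^N$ with $N=\sum_{i=k-1}^{d-1}(\pi_i+1)$. For each $i$, the level-$i$ block of $\Psi$ consists of $\pi_i+1$ coordinates: one recording $\mu_i^{S_{i+1}}(S_i^+)-\mu_i^{S_{i+1}}(S_i^-)$, and one for each $f\in\mathcal{F}_i$ recording $f(S_i^+)-f(S_i^-)$. The group $G=(\zz/2)^{d-k+1}$ acts on $X$ by letting its $i$-th factor send $V_i\mapsto -V_i$, an operation that preserves the underlying subspace $S_i$ but swaps $S_i^+\leftrightarrow S_i^-$ while fixing all $S_j^{\pm}$ with $j\neq i$. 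Consequently the level-$i$ block of $\Psi$ is odd under the $i$-th $\zz/2$ factor and invariant under the others, making $\Psi$ equivariant for the corresponding $G$-representation on $\rr^N$. Any zero of $\Psi$ yields a flag bisecting every prescribed mass assignment and function.

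Third, I would invoke a Borsuk--Ulam theorem for iterated $\zz/2$-sphere bundles to conclude that $\Psi$ has a zero. The dimensions match precisely because $\pi$ permutes $(k-1,\ldots,d-1)$:
\[
\dim X=\sum_{i=k-1}^{d-1}(i+1)=(d-k+1)+\sum_{i=k-1}^{d-1}i=(d-k+1)+\sum_{i=k-1}^{d-1}\pi_i=N.
\]
The main obstacle is the topological step. The space $X$ is not a product of spheres but a twisted iterated sphere bundle, so one must show that the mod-$2$ equivariant Euler class of the target representation $\rr^N$ does not vanish when pulled back to $H^{N}_G(X;\zz/2)$. The specific block structure of the action (the $i$-th $\zz/2$ acts antipodally on the fiber $S^{i+1}$ and trivially on everything else) should let one factor the total Euler class as a product of level-wise Euler classes and compute it inductively using the Serre spectral sequence of the successive sphere bundles $X_{\le i}\to X_{\le i+1}$. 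Crucially, the permutation $\pi$ only redistributes the powers of the $\zz/2$ generators among the levels without altering their total degree $N$, so the non-vanishing of the product survives any permutation, which is precisely why we obtain the theorem for every $\pi$ rather than only the identity (which would reduce to iterated ham sandwich).
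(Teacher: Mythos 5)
Your proposal and the paper's proof share the Borsuk--Ulam-on-Stiefel-manifolds spirit, but there is a genuine gap in the middle: your topological step is a plan, not a proof, and your parametrization also has a subtle defect that the paper's approach is specifically designed to avoid.

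Concretely, you parametrize both the normal directions and the offsets of each $S_i$, packaging each level as a unit vector $V_i=(w_i,-b_i)\in S^{i+1}$. This makes the configuration space bigger (dimension $\sum_{i=k-1}^{d-1}(i+1)$) and correspondingly you need a zero of a map into $\rr^N$ with $N=\sum_{i=k-1}^{d-1}(\pi_i+1)$. Two issues arise. First, the space $X$ you build is not actually a fibre bundle with constant fibre: when $w_{i+1}=0$ (the ``hyperplane at infinity'' point on the level-$(i+1)$ sphere), the orthogonality constraint on $w_i$ drops a dimension, so the ``fibre'' over that point is larger than $S^{i+1}$. Even setting that aside, when $w_{i+1}=0$ there is no affine $S_{i+1}$, so the notion of $S_i$ being a hyperplane of $S_{i+1}$, and the induced mass assignments on $S_{i+1}$, are not defined; handling this degeneration continuously at every level is nontrivial and not addressed. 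Second, even granting a well-defined compact $G$-space, the Borsuk--Ulam theorem you need for this twisted iterated sphere space is not a standard statement, and your sketch (spectral sequence / equivariant Euler class computation) is exactly the hard part of the argument. You acknowledge this as ``the main obstacle,'' which is fair, but it means the proof is incomplete.

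The paper sidesteps both issues with one move: rather than making the offsets $b_i$ free parameters, it uses the mass assignment $\mu_i$ to \emph{pin down} the offset. Given a direction $v_{\pi_i}$, the subspace $S_i$ is \emph{defined} as the (median of the) translate of the hyperplane perpendicular to $v_{\pi_i}$ in $S_{i+1}$ that bisects $\mu_i^{S_{i+1}}$. This guarantees every $S_i$ is a genuine affine subspace (no ``at infinity'' case), absorbs the $d-k+1$ bisection constraints for the $\mu_i$'s into the construction itself, and collapses the configuration space to the Stiefel manifold $V_{d-k+1}(\rr^d)$ of pure direction frames. The test map then has target $\rr^{d-1}\times\cdots\times\rr^{k-1}$ encoding only the $f_{i,j}$-defects (routed to the block $x_{\pi_i}$), and the zero follows immediately from the Chan--Chen--Frick--Hull theorem stated in the paper, with no new equivariant topology required. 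In short, your dimensional bookkeeping is correct and your equivariance analysis of the sign flip $V_i\mapsto -V_i$ is right, but you are missing the key structural simplification --- use $\mu_i$ to fix the translate --- that reduces the configuration space to a compact space for which the needed Borsuk--Ulam theorem is already known.
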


The case $k=d$ is the ham sandwich theorem.  The proof of \cref{fairy bread sandwich} uses a Borsuk--Ulam type theorem for Stiefel manifolds, fully described in \cref{sec:main-proof}.  We use \cref{fairy bread sandwich} to strengthen results about mass assignments and to confirm conjectures on the subject.  Our proof method significantly reduces the need for topological machinery to tackle these problems.

\cref{fairy bread sandwich} is optimal for the trivial permutation $\pi = (d-1,\ldots, k-1)$, as the inductive argument shows.  We can choose the mass assignments such that, at each step, the ham sandwich cut is unique, so no more than $i$ mass assignments of dimension $i$ can be cut at any step.  In general, the total number of functions we are halving matches the dimension of the space of full flags we use. We conjecture \cref{fairy bread sandwich} is optimal for all permutations.

If we consider  $\pi=(d-2,d-3,\ldots,k-1,d-1)$ and only use mass assignments, we are halving $d$ mass assignments on the $k$-dimensional space $S_k$, significantly exceeding what a direct application of the ham sandwich theorem gives.  We show that we can impose additional geometric conditions on $S_k$, related to the directions it contains.

\begin{definition}
Let $k \le d$ be positive integers and $e_1, \ldots, e_d$ be the canonical basis of $\rr^d$.  Given an affine subspace $S$ of $\rr^d$, we say that $S$ is $k$-vertical if it contains rays in the directions $e_{d-k+1}, \ldots, e_{d-1}, e_{d}$.
\end{definition}

The following theorem confirms a conjecture by Schnider \cite{Schnider:2020kk}*{Conj.2.4}, which was previously known for $d-k+2$ mass assignments instead of $d$.

\begin{theorem}\label{thm:rotation-case}
Let $k \le d$ be positive integers.  Given a $k$-dimensional mass assignment $\mu$ and $d-1$ assignments of continuous functions on $k$-dimensional half-spaces $f_1,\ldots,f_{d-1}$, there exists a $(k-1)$-vertical $k$-dimensional linear subspace $S_k$ and a $(k-1)$-dimensional affine subspace $S_{k-1} \subset S_k$ such that $S_{k-1}$ bisects $\mu^{S_k}$ and all the functions $f_1,\ldots,f_{d-1}$ on $S_k$.
\end{theorem}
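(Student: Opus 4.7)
The plan is to set up a Borsuk--Ulam type argument on a sphere bundle that parametrizes the pairs $(S_k, S_{k-1}^+)$ of interest and then to compute a $\zz/2$-index that forces a zero of the natural test map. Let $W = \operatorname{span}(e_{d-k+2}, \ldots, e_d)$ and $W^\perp = \operatorname{span}(e_1, \ldots, e_{d-k+1})$. Every $(k-1)$-vertical $k$-dimensional linear subspace of $\rr^d$ has the form $S_k(v) = W \oplus \operatorname{span}(v)$ for a unique line $\rr v \subset W^\perp$, so these subspaces are parametrized by $v \in S^{d-k}$ with $v$ and $-v$ giving the same $S_k$. Parametrize the closed half-spaces of $S_k(v)$ by points $(\alpha, \beta, c) \in S^k \subset \rr \oplus \rr^{k-1} \oplus \rr$ via
\[
H(v, \alpha, \beta, c) = \Big\{x \in S_k(v) : \alpha \langle x, v \rangle + \sum_{j=2}^{k} \beta_j \langle x, e_{d-k+j}\rangle \leq c\Big\},
\]
and let $\Phi \colon S^{d-k} \times S^k \to \rr^d$ record the signed differences $\mu^{S_k(v)}(H) - \mu^{S_k(v)}(H^c)$ together with $f_i(H) - f_i(H^c)$ for $i = 1, \ldots, d-1$.

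A direct substitution shows $\Phi$ is invariant under the free involution $A \colon (v,\alpha,\beta,c) \mapsto (-v,-\alpha,\beta,c)$ (the half-space is geometrically unchanged) and equivariant with a sign flip under $B \colon (v,\alpha,\beta,c) \mapsto (v,-\alpha,-\beta,-c)$ (the half-space is swapped with its complement). Pass to the free quotient $X = (S^{d-k} \times S^k)/A$, which is naturally the total space of the sphere bundle $S(\gamma \oplus \epsilon^k) \to \rr P^{d-k}$ with $\gamma$ the tautological line bundle and $\epsilon^k$ trivial of rank $k$; the induced map $\bar\Phi \colon X \to \rr^d$ is $\zz/2$-equivariant for the residual $B$-action, which is fiberwise antipodal and hence free.

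The core step, and the main obstacle, is to show that $\operatorname{ind}_{\zz/2}(X) = d$. Since $w(\gamma \oplus \epsilon^k) = 1 + a$ with $a = w_1(\gamma) \in H^1(\rr P^{d-k};\, \zz/2)$, the projective bundle theorem gives
\[
H^*(X/B;\, \zz/2) \cong \zz/2[a,t]\big/\big(a^{d-k+1},\, t^{k+1} + a t^k\big),
\]
where $t \in H^1(X/B;\, \zz/2)$ is the tautological class of the double cover $X \to X/B$. The relation $t^{k+1} = a t^k$ yields $t^d = a^{d-k} t^k \neq 0$ while $t^{d+1} = a^{d-k+1} t^k = 0$, so the index equals $d$, no $\zz/2$-equivariant map $X \to S^{d-1}$ exists, and $\bar\Phi$ must vanish somewhere. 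Lifting a zero back to $(v,\alpha,\beta,c)$ produces the $(k-1)$-vertical subspace $S_k = S_k(v)$ together with the hyperplane $S_{k-1} = \partial H(v,\alpha,\beta,c) \subset S_k$ that bisects $\mu^{S_k}$ and each $f_j$ on $S_k$. The two poles of every $S^k$-fiber, where $H$ degenerates to the empty or full half-space, cause no trouble since each coordinate of $\Phi$ there attains the total mass of the corresponding function and cannot vanish, so the zero produced always corresponds to a genuine hyperplane.
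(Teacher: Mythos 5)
Your route is genuinely different from the paper's. The paper proves this as a corollary of the Fairy Bread Sandwich theorem: it introduces auxiliary mass assignments given by unit balls centered at $0, e_d, \ldots, e_{d-k+2}$, so that the flag $S_{d-1}\supset\cdots\supset S_k$ produced by \cref{fairy bread sandwich} (applied with the permutation $(d-2,\ldots,k-1,d-1)$) is forced to be $(k-1)$-vertical at every level, and the topology is outsourced to the Chan--Chen--Frick--Hull theorem for $V_{d-k+1}(\rr^d)$ with its $(\zz_2)^{d-k+1}$-action. You instead parametrize the pair $(S_k,S_{k-1})$ directly by $S^{d-k}\times S^k$, pass to the sphere bundle $S(\gamma\oplus\epsilon^k)\to\rr P^{d-k}$, and compute the $\zz/2$-index of the residual antipodal action via the projective bundle theorem. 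The setup is correct: the two involutions $A$ and $B$ commute, are free, $\Phi$ is $A$-invariant and $B$-antiequivariant, $X/B\cong P(\gamma\oplus\epsilon^k)$, and the relation $t^{k+1}=at^k$ in $\zz/2[a,t]/(a^{d-k+1},t^{k+1}+at^k)$ does give $t^d=a^{d-k}t^k\ne 0$, $t^{d+1}=0$, so $\operatorname{ind}_{\zz/2}(X)=d$. This is a nice self-contained alternative that avoids both the auxiliary assignments and the full flag.

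The one genuine gap is at the poles of the $S^k$ fiber, and your closing sentence glosses over it. The coordinate functions $f_i(H)-f_i(H^c)$ are built from arbitrary continuous $f_i\colon\hs_k(\rr^d)\to\rr$, and $\hs_k(\rr^d)$ consists of \emph{proper} closed half-spaces; as $(\alpha,\beta,c)\to(0,0,\pm1)$ the half-space $H$ escapes this space, and there is no reason for $f_i(H)-f_i(H^c)$ to have a limit at all, let alone to equal some ``total mass'' -- the $f_i$ need not come from measures. So $\Phi$ as written is not a continuous map on the compact space $S^{d-k}\times S^k$, and the index argument does not apply to it directly. (Only the $\mu$-coordinate converges at the poles, to $\pm\mu^{S_k(v)}(S_k(v))$, and even there one should say a word about the case $\mu^{S_k(v)}=0$.) This is exactly the issue the paper's proof dodges by using a mass assignment to pin the translate of each $S_i$, so that degenerate half-spaces never arise. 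Your argument is salvageable: since the $\mu$-coordinate is bounded away from $0$ on an equivariant neighborhood $U$ of the poles (by compactness of $S^{d-k}$, assuming $\mu^{S}$ has positive total mass for all $S$), you can multiply the $f_i$-coordinates by an equivariant bump function vanishing near the poles and identically $1$ off $U$; the resulting map is continuous on all of $S^{d-k}\times S^k$, is still $A$-invariant and $B$-antiequivariant, has no zeros in $U$, and agrees with $\Phi$ off $U$. Alternatively, you could use $\mu$ to fix the offset $c$ as a function of $(v,\alpha,\beta)\in S^{d-k}\times S^{k-1}$ first and record only the $f_i$, trading $S^k$ for $S^{k-1}$ and $\rr^d$ for $\rr^{d-1}$; that is closer in spirit to how the paper sets up its test map. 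Either fix would close the gap; as written, the proof asserts continuity and nonvanishing at the poles without justification, and the stated reason (``attains the total mass of the corresponding function'') is incorrect for the $f_i$.
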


Another application of \cref{fairy bread sandwich} is the existence of central transversals for mass assignments.  The central transversal theorem is a gem of discrete geometry.  It guarantees the existence of affine subspaces which are ``very deep'' within a family of measures, due to Dol'nikov and to \v{Z}ivaljevi\'c and Vre\'cica \cites{Dolnikov:1992ut, Zivaljevic1990}.  For a finite measure $\mu$ in $\rr^d$, we say that an affine subspace $L$ is a $\lambda$-transversal if the dimension of $L$ is $\lambda$ and every closed half-space that contains $L$ has measure greater than or equal to $\left(\frac{1}{d+1-\lambda}\right)\mu(\rr^d)$.

\begin{theorem}[Central transversal theorem]
    Let $\lambda < d$ be non-negative integers.  Let $\mu_1, \ldots, \mu_{\lambda+1}$ be finite measures in $\rr^d$.  Then, there exists an affine subspace $L$ of $\rr^d$ that is a $\lambda$-transversal for each of $\mu_1, \ldots, \mu_{\lambda+1}$.
\end{theorem}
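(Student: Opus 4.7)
The plan is to parametrize candidate transversals by an appropriate Stiefel manifold and apply a Borsuk--Ulam-type theorem, generalizing the standard proof of the ham sandwich theorem (which is the case $\lambda = d-1$).

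First I would reformulate the conclusion via projection: for a linear $(d-\lambda)$-dimensional subspace $W \subset \rr^d$, let $\pi_W$ denote orthogonal projection onto $W$. An affine $\lambda$-subspace $L$ with direction $W^\perp$ is a $\lambda$-transversal for $\mu$ if and only if the single point $\pi_W(L) \in W$ is a centerpoint of the push-forward $(\pi_W)_*\mu$ on $W \cong \rr^{d-\lambda}$, meaning every closed half-space of $W$ through $\pi_W(L)$ carries mass at least $\mu(\rr^d)/(d+1-\lambda)$. So it suffices to produce a $(d-\lambda)$-dimensional $W$ and a point $q \in W$ simultaneously serving as a centerpoint for all $\lambda+1$ projected measures. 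The extreme cases $\lambda = 0$ and $\lambda = d-1$ recover Rado's centerpoint theorem and the ham sandwich theorem, respectively.

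To realize this topologically, I would embed $\rr^d$ as the affine hyperplane $\rr^d \times \{1\} \subset \rr^{d+1}$. To each orthonormal $(d-\lambda)$-frame $(u_1, \ldots, u_{d-\lambda})$ in the Stiefel manifold $V_{d-\lambda}(\rr^{d+1})$, associate the closed half-spaces $H_j^{+} = \{x \in \rr^d : \langle u_j, (x,1)\rangle \geq 0\}$; generically, the boundary hyperplanes $\partial H_j^+$ intersect in a $\lambda$-dimensional affine subspace $L$ of $\rr^d$. I would then define a test map $F\colon V_{d-\lambda}(\rr^{d+1}) \to \rr^N$ recording, for each measure $\mu_i$ and each sign pattern $\sigma \in \{+,-\}^{d-\lambda}$, the deviation of $\mu_i(\bigcap_j H_j^{\sigma_j})$ from the value dictated by the transversal condition. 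The group $(\zz/2)^{d-\lambda}$ acts on the domain by sign changes of frame vectors and on the target by the compatible action on sign patterns, making $F$ equivariant.

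The final step is to invoke a Borsuk--Ulam-type theorem for $V_{d-\lambda}(\rr^{d+1})$ with this $(\zz/2)^{d-\lambda}$-action, forcing $F$ to meet the zero set that certifies the transversal property. The main obstacle, and the core of the classical proofs of Dol'nikov and of \v{Z}ivaljevi\'c--Vre\'cica, is verifying the non-existence of an equivariant map avoiding that zero set: this reduces to a non-vanishing calculation for a characteristic class in the mod-$2$ cohomology of the Stiefel manifold (equivalently, a nonzero Fadell--Husseini index), and the computation is particular to this equivariant setting. General finite measures are handled by a routine weak-convergence approximation with smooth, compactly supported densities, for which generic transversality makes the parametrization well-defined.
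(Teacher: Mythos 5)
Your opening reformulation is exactly right: finding a $\lambda$-transversal for $\mu$ in direction $W^\perp$ is equivalent to finding a centerpoint of the pushforward $(\pi_W)_*\mu$ on $W \cong \rr^{d-\lambda}$. This is also the reformulation underlying the paper's proof (which proves the stronger Theorem~\ref{thm:center-transversal-vertical} and recovers the central transversal theorem at $k=d$). But the proposal has a genuine gap in the test-map construction, and it also leans on machinery the paper deliberately avoids.

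The gap: the $\lambda$-transversal condition is an infinite family of inequalities (one for every closed half-space through $L$), not a finite set of equalities, so a test map recording $\mu_i\bigl(\bigcap_j H_j^{\sigma_j}\bigr)$ over the $2^{d-\lambda}$ orthant sign patterns does not have the centerpoint/transversal set as its zero set. Having each orthant carry its expected mass is neither necessary nor sufficient for $\pi_W(L)$ to be a centerpoint of the projected measure, so a zero of the proposed map would not certify the transversal property and you would have proved a different equipartition statement. What you need is a way to collapse the centerpoint condition to a finite-dimensional equality, and this is precisely the step the paper supplies: for each frame, project each $\mu_j$ onto $M_{d-\lambda} = \operatorname{span}\{v_1,\ldots,v_{d-\lambda}\}$, observe that the centerpoint set of the projection is compact and convex, take its barycenter $p_j$, and then build the $(\zz_2)^{d-\lambda}$-equivariant map whose coordinates are the inner products $\langle v_i,\,p_j - p_{\lambda+1}\rangle$. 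A zero forces all the $p_j$ to coincide, and the common point $p_1 + M_{d-\lambda}^{\perp}$ is then a $\lambda$-transversal to every $\mu_j$ simultaneously. Because the affine location of $L$ is encoded by the centerpoint $p_j$ rather than by the parametrizing frame, the paper works with $V_{d-\lambda}(\rr^d)$ and does not need to pass to $V_{d-\lambda}(\rr^{d+1})$.

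Separately, you end by appealing to a Fadell--Husseini index or mod-$2$ characteristic class computation ``particular to this equivariant setting,'' which amounts to citing the conclusion rather than proving it. The paper instead invokes Theorem~\ref{thm:topo} (Chan--Chen--Frick--Hull): every continuous $(\zz_2)^{d-\lambda}$-equivariant map $V_{d-\lambda}(\rr^d) \to \rr^{d-1}\times\cdots\times\rr^{\lambda}$ has a zero, a statement provable by elementary homotopy arguments. Once the barycenter-of-centerpoints map is in hand, that single Borsuk--Ulam-type statement closes the argument with no obstruction-theoretic computation. So the skeleton of your proposal is sound, but you are missing the reduction of the transversal condition to an equality-type test map, and you have substituted a hard topological input for an elementary one.
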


The case $\lambda = 0$ is Rado's centerpoint theorem \cite{Rado:1946ud}, and the case $\lambda = d-1$ is the ham sandwich theorem.  A simple proof of the central transversal theorem was recently found by Manta and Sober\'on \cite{Manta2021}.  We strengthen the results of Schnider for central transversals for mass assignments \cite{Schnider:2020kk}*{Thm 4.3} in a similar way as \cref{thm:rotation-case}.
 
\begin{theorem}\label{thm:center-transversal-vertical}
	Let $0 \le \lambda < k \le d$ be non-negative integers.  Let $\mu_1, \ldots, \mu_{d-k+\lambda+1}$ be $k$-dimensional mass assignments in $\rr^d$.  Then there exists a $\lambda$-vertical, $k$-dimensional linear subspace $S_k$ of $\rr^d$ and an affine subspace $L \subset S_k$ of dimension $\lambda$ such that $L$ is a $\lambda$-transversal to each of $\mu_1^{S_k}, \ldots, \mu_{d-k+\lambda+1}^{S_k}$.
\end{theorem}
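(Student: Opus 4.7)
The plan is to deduce \cref{thm:center-transversal-vertical} from \cref{fairy bread sandwich} via a projection along the vertical direction. Let $V_\lambda = \mathrm{span}(e_{d-\lambda+1}, \ldots, e_d)$. Every $\lambda$-vertical $k$-dimensional linear subspace of $\rr^d$ is of the form $S_k = V_\lambda \oplus W$, where $W$ is a $(k-\lambda)$-dimensional linear subspace of $V_\lambda^\perp \cong \rr^{d-\lambda}$, and pairs $(W,p)$ with $p \in W$ correspond to $\lambda$-dimensional affine subspaces $L = V_\lambda + p \subset S_k$ parallel to $V_\lambda$. A direct pushforward computation shows that $L$ is a $\lambda$-transversal of $\mu_i^{S_k}$ in $S_k$ if and only if $p$ is a centerpoint with parameter $\tfrac{1}{k-\lambda+1}$ of the projected measure $\nu_i^W$ on $W$ obtained by pushing $\mu_i^{S_k}$ down to $S_k/V_\lambda \cong W$. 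This reduces the theorem to finding $W$ and $p$ such that $p$ is a simultaneous centerpoint of the $d-k+\lambda+1$ projected measures $\nu_1^W, \ldots, \nu_{d-k+\lambda+1}^W$.

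Next, I would apply \cref{fairy bread sandwich} inside $\rr^{d-\lambda}$ to the $(k-\lambda)$-dimensional mass assignments $\nu_i$, choosing a permutation of mass-assignment dimensions that places all $d-k+\lambda+1$ measures at a single level of the flag while the higher levels are used to pin down $W \in G(k-\lambda, d-\lambda)$. The common-centerpoint condition, which is an inequality rather than a bisection, should be encoded as a bisection via a Sarkaria- or Dol'nikov-type trick: augment the list of functions to be bisected with auxiliary continuous functions (for instance, point masses or dually defined continuous functions parametrized by the unit sphere of directions in $W$) whose simultaneous bisection forces the $\tfrac{1}{k-\lambda+1}$-threshold to be attained for each $\nu_i^W$. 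The bottom element of the resulting flag is the desired point $p \in W$, and lifting back to $\rr^d$ through $V_\lambda$ produces $S_k$ and $L = V_\lambda + p$.

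The main obstacle is making this translation rigorous: converting the inequality-based centerpoint condition into bisection equalities matched to the framework of \cref{fairy bread sandwich}, and verifying that the permutation of mass-assignment dimensions chosen is compatible with the Dol'nikov--\v{Z}ivaljevi\'c--Vre\'cica-style encoding. In particular, one must check that the extra freedom in $W \in G(k-\lambda, d-\lambda)$ is precisely what absorbs the $d-k$ mass assignments in excess of the $\lambda+1$ handled by the classical central transversal theorem, and that this freedom aligns with the permutations allowed by \cref{fairy bread sandwich}. A secondary subtlety is that the $\nu_i$ are a priori mass assignments only on linear $(k-\lambda)$-dimensional subspaces of $\rr^{d-\lambda}$; extending them to all affine subspaces (for example, by translation invariance along $V_\lambda$) in a way that is compatible with the flag construction is a required bookkeeping step.
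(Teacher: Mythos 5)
Your reduction breaks at the step where the common-centerpoint condition must be fed into \cref{fairy bread sandwich}. The $\lambda$-transversal condition is an \emph{inequality} over every closed half-space containing $L$; it is not a bisection, and there is no finite list of assignments of continuous functions on $\hs$ whose simultaneous bisection forces it. Your appeal to a ``Sarkaria-- or Dol'nikov-type trick'' is precisely the missing idea, not a routine bookkeeping step: the Dol'nikov--\v{Z}ivaljevi\'c--Vre\'cica and Manta--Sober\'on proofs of the central transversal theorem do not encode centerpoints as bisections at all; they select a canonical centerpoint continuously and then impose a \emph{coincidence} condition. The paper does exactly this: it takes, for each $\mu_j$, the orthogonal projection $\sigma_j$ of $\mu_j^{S_k}$ onto $M_{k-\lambda}=\operatorname{span}\{v_1,\ldots,v_{k-\lambda}\}$, lets $p_j$ be the barycenter of the (convex, compact) set of centerpoints of $\sigma_j$, and records the coordinates $\langle v_i, p_j - p_{d-k+\lambda+1}\rangle$. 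That map lives directly on $V_{d-\lambda}(\rr^d)$ and is fed into \cref{thm:topo}; it is not an instance of \cref{fairy bread sandwich}. Moreover, \cref{fairy bread sandwich} bisects at most $(d-\lambda)$ objects at any single level of a flag in $\rr^{d-\lambda}$, so your plan to place all $d-k+\lambda+1$ measures at one level already fails arithmetically whenever $2\lambda+1>k$.

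There is a second, independent problem: you restrict $L$ to be a translate of $V_\lambda$, i.e., you seek $L$ that is itself $\lambda$-vertical. The theorem only requires $S_k$ to be $\lambda$-vertical; the transversal $L$ produced by the paper is $p_j + \bigl(\operatorname{span}\{v_1,\ldots,v_{d-\lambda}\}\bigr)^{\perp}$, whose $\lambda$ free directions need not be vertical, and the proof uses this extra freedom. Insisting on $L\parallel V_\lambda$ trades away $\lambda(k-\lambda)$ degrees of freedom that the construction needs. The remaining verticality constraint on $S_k$ in the paper is enforced exactly as in the proof of \cref{thm:rotation-case}, with mass assignments $\tau_{d-i,j}$ built from balls $D_{e_{d+1-j}}$; combining both blocks gives a single $(\zz_2)^{d-\lambda}$-equivariant map into $\rr^{d-1}\times\cdots\times\rr^{\lambda}$, and its zero (guaranteed by \cref{thm:topo}) yields the result directly. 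I would recommend abandoning the reduction through \cref{fairy bread sandwich} and instead constructing the equivariant map on the Stiefel manifold by hand, using a continuous centerpoint selection.
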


In the theorem above, note that since $L$ is a $\lambda$-transversal in a $k$-dimensional space, we require that each closed half-space of $S_k$ that contains $L$ has a $({1}/(k-\lambda+1))$-fraction of each $\mu_i^{S_k}$.  The case $k=d$ is the central transversal theorem.  The case $\lambda=k-1$ is \cref{thm:rotation-case} when all functions are induced by mass assignments.
 
 \cref{thm:rotation-case} guarantees that, given $d$ mass assignment on subspaces of dimension $k$, we can find a $k$-dimensional subspace $S_k$ for which $k-1$ directions are fixed in which a hyperplane halves all measures.  We can think of the last direction of $S_k$ to be allowed to rotate in $S^{d-k}$, a $(d-k)$-dimensional sphere.  If, instead of rotations, we look for translations, we obtain exciting problems.  In this scenario, we fix all directions of $S_k$ but allow it to be translated, so we now have $\rr^{d-k}$ choices for $S_k$.  We need to impose additional conditions for the mass assignments since, otherwise, it's easy to construct mass assignments that are constant in all translated copies of a particular $S_k$.
 
 We are particularly interested in the case of mass assignments induced by families of subspaces.  A set of $(d-k)$-dimensional affine subspaces in $\rr^d$ induces a discrete measure on any $k$-dimensional affine subspace $S_k$.  We simply look for all $(d-k)$-dimensional subspaces $L$ for which $L \cap S_k$ is a point, and consider the resulting subset of $S_k$ as a discrete measure.  In sections \ref{sec:horizontal-separating-spaces} and \ref{sec:dynamic-ham-sandwich} we define continuous versions of these assignments which do induce mass assignments.
 
 We show that these mass assignments are restrictive enough to obtain positive ham sandwich results for the translation case.  The original motivation to work on mass assignments was a conjecture of Barba about bisections of three families of lines in $\rr^3$ with one more line \cite{Schnider:2020kk}.  We show in \cref{thm:horizontal-and-lines} that for $k=d-1$ we can halve $d$ mass assignments of dimension $d-1$ and require that $S_{d-1}$ is $1$-vertical and $S_{d-2}$ is perpendicular to $e_d$.  In terms of Barba's conjecture, it shows that we can halve three families of lines in $\rr^3$ using a horizontal line.
 
 If we fix a line $\ell$ in $\rr^d$ and translate a hyperplane $S_{d-1}$, the intersection point $\ell \cap S_{d-1}$ moves at a constant velocity.  Therefore, the problem of halving mass assignments induced by lines in some translate of a hyperplane is equivalent to halving families of points in $\rr^{d-1}$ that move at constant velocities.  The goal is to show that at some point in time we can simultaneously halve more families of points than the dimension.  We call such results ``dynamic ham sandwich theorems''.  Interestingly, whether this is possible depends on the parity of $d$, as we show in \cref{thm:translation-strange}.  The dynamic ham sandwich results are therefore not a direct consequence of \cref{fairy bread sandwich}.  For the case $k=1$, halving families of hyperplanes in the translate of some fixed line, there is no dependence on the parity of $d$, as we show in \cref{thm:translated-line}.

We can compare our results from \cref{sec:dynamic-ham-sandwich} with earlier results for splitting families of hyperplanes with lines.  The dual theorem to recent developments regarding bisection with hyperplane arrangements \cite{Blagojevic2018} (see \cite{RoldanPensado2021}*{Thm 3.4.7}) shows that \textit{Given $2d - O(\log d)$ families of hyperplanes in $\rr^d$, each with an even cardinality, there exists a segment or an infinite ray that intersect exactly half the hyperplanes in each family}.  Our results show that we can do something similar for $d$ families of hyperplanes if we further fix the direction of the intersecting line and only consider infinite rays instead of segments.

In \cref{sec:main-proof} we prove \cref{fairy bread sandwich} and show how it implies \cref{thm:rotation-case}.  Then, we prove \cref{thm:center-transversal-vertical} in \cref{sec:central-transversal}.  We show in \cref{sec:horizontal-separating-spaces} that results similar to \cref{thm:rotation-case} can be proved if we replace the condition of $S_{k}$ going through the origin by conditions on $S_{k-1}$, when $k=d-1$.  Finally, we present our results on dynamic ham sandwich theorems in \cref{sec:dynamic-ham-sandwich}.   We include open questions and conjectures throughout the paper.

\section{Topological tools and proof of \cref{fairy bread sandwich}}\label{sec:main-proof}

We use a recent generalization of the Borsuk--Ulam theorem for Stiefel manifolds.  Borsuk--Ulam type results for Stiefel manifolds have been studied extensively \cites{Fadell:1988tm, Dzedzej1999, Chan2020}.  For $k \le d$, the Stiefel manifold $V_k(\rr^d)$ is the set of orthonormal $k$-frames in $\rr^d$.  Formally,
\[
V_k(\rr^d) = \{(v_1,\ldots, v_k) \in (\rr^d)^k : v_1,\ldots, v_k \mbox{ are orthogonal unit vectors} \}.
\]
The Stiefel manifold $V_k(\rr^d)$ has a natural free action of $(\zz_2)^k$.  We consider $\zz_2 = \{-1,1\}$ with multiplication.  Then, for $(v_1,\ldots, v_k) \in V_k(\rr^d)$, $(\lambda_1,\ldots, \lambda_k) \in (\zz_2)^k$, and $(x_1,\ldots, x_k) \in \rr^{d-1}\times \rr^{d-2}\times \ldots \times \rr^{d-k}$ we define
\begin{align*}
    (\lambda_1,\ldots, \lambda_k)\cdot (v_1,\ldots, v_k) & = (\lambda_1 v_1, \ldots, \lambda_k v_k) \\
    (\lambda_1,\ldots, \lambda_k)\cdot (x_1,\ldots, x_k) & = (\lambda_1 x_1, \ldots, \lambda_k x_k)
\end{align*}

Our main topological tool is the following theorem.

\begin{theorem}[Chan, Chen, Frick, Hull 2020 \cite{Chan2020}]\label{thm:topo}
    Let $k \le d$ be non-negative integers.  Every continuous $(\zz_2)^k$-equivariant map $f:V_k(\rr^d) \rightarrow \rr^{d-1}\times \ldots \times \rr^{d-k}$ has a zero.
\end{theorem}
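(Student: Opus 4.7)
The plan is to proceed by induction on $k$. The base case $k=1$ is the classical Borsuk--Ulam theorem, since $V_1(\rr^d)=S^{d-1}$ with the antipodal $\zz_2$-action and any equivariant map to $\rr^{d-1}$ must have a zero.

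For the inductive step, I would use the Fadell--Husseini index in $\zz_2$-coefficients. Suppose for contradiction that $f\colon V_k(\rr^d)\to W$ has no zero, where $W = \rr^{d-1}\oplus\cdots\oplus\rr^{d-k}$ with the indicated $(\zz_2)^k$-action. Then $f/\|f\|$ is a $(\zz_2)^k$-equivariant map $V_k(\rr^d)\to S(W)$, and passing to Borel constructions and pulling back the Thom class of $W$ forces the equivariant Euler class $e(W)\in H^*_{(\zz_2)^k}(V_k(\rr^d);\zz_2)$ to vanish. Since the $i$-th summand $\rr^{d-i}$ is a direct sum of $d-i$ copies of the sign representation of the $i$-th $\zz_2$-factor, this Euler class is the monomial $e(W) = t_1^{d-1}t_2^{d-2}\cdots t_k^{d-k}$, where $t_i\in H^1(B(\zz_2)^k;\zz_2)$ is the generator associated with the $i$-th factor. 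A contradiction will follow once this monomial is shown to be nonzero in $H^*_{(\zz_2)^k}(V_k(\rr^d);\zz_2)$.

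The main obstacle is therefore the cohomology computation
\[
H^*_{(\zz_2)^k}(V_k(\rr^d);\zz_2) \cong \zz_2[t_1,\ldots,t_k]/(t_1^d,\,t_2^{d-1},\,\ldots,\,t_k^{d-k+1}),
\]
which I would establish inductively via the sphere-bundle projection $V_k(\rr^d)\to V_{k-1}(\rr^d)$ that forgets the last vector. Its fiber over $(v_1,\ldots,v_{k-1})$ is the unit sphere $S^{d-k}$ in the orthogonal complement of $\mathrm{span}(v_1,\ldots,v_{k-1})$, and the $k$-th $\zz_2$-factor acts antipodally on this fiber while the first $k-1$ factors act through the base. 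Passing to Borel constructions yields a fibration $S^{d-k}_{h\zz_2}\to V_k(\rr^d)_{h(\zz_2)^k}\to V_{k-1}(\rr^d)_{h(\zz_2)^{k-1}}$ whose fiber cohomology is $\zz_2[t_k]/(t_k^{d-k+1})$. The generator $t_k$, pulled back from $B(\zz_2)^k$, restricts to the generator on each fiber, so Leray--Hirsch applies and yields the displayed ring once combined with the inductive hypothesis for the base. In this truncated polynomial ring each exponent $d-i$ is strictly less than the truncation $d-i+1$, so the monomial $t_1^{d-1}\cdots t_k^{d-k}$ is nonzero, completing the contradiction.
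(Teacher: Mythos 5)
First, a point of reference: the paper does not prove this statement at all --- it is quoted from Chan--Chen--Frick--Hull, whose own proof uses a sum-of-degrees invariant, and the authors remark that it also follows from the Fadell--Husseini computations for $T(k)$-spaces or from elementary homotopy arguments. Your proposal is essentially the Fadell--Husseini route: a nowhere-zero equivariant map forces the top Stiefel--Whitney class of $V_k(\rr^d)\times_{(\zz_2)^k}W$ to vanish, that class is $t_1^{d-1}t_2^{d-2}\cdots t_k^{d-k}$, and so it suffices to show this monomial survives in $H^*_{(\zz_2)^k}(V_k(\rr^d);\zz_2)$. That outline is correct, and the degrees match up: $\deg e(W)=\dim W=\dim V_k(\rr^d)$, so you are asking for nonvanishing in the top degree of the free quotient.

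The genuine flaw is the asserted ring presentation $H^*_{(\zz_2)^k}(V_k(\rr^d);\zz_2)\cong\zz_2[t_1,\ldots,t_k]/(t_1^{d},t_2^{d-1},\ldots,t_k^{d-k+1})$. Leray--Hirsch applied to $S^{d-k}_{h\zz_2}\to V_k(\rr^d)_{h(\zz_2)^k}\to V_{k-1}(\rr^d)_{h(\zz_2)^{k-1}}$ yields only that the total-space cohomology is a \emph{free module} over the base with basis $1,t_k,\ldots,t_k^{d-k}$; it does not give $t_k^{d-k+1}=0$, and in general $t_k^{d-k+1}$ is a nontrivial combination of lower powers of $t_k$ with coefficients pulled back from the base. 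Concretely, for $k=d=2$ one has $V_2(\rr^2)=O(2)$, the action is free with quotient a circle, and the monodromy of the covering is generated by $(-1,-1)$, so $t_1=t_2$ equals the generator of $H^1(S^1;\zz_2)$; in particular $t_2\neq 0$, contradicting the relation $t_2^{d-k+1}=t_2=0$ that your presentation would impose. Your last sentence, which reads off nonvanishing of the monomial from the truncated polynomial ring, therefore rests on a false isomorphism. Fortunately the weaker module statement already suffices: since $t_1,\ldots,t_{k-1}$ are pulled back from the base of the fibration, $t_1^{d-1}\cdots t_k^{d-k}=\bigl(t_1^{d-1}\cdots t_{k-1}^{d-k+1}\bigr)\cdot t_k^{d-k}$ is a base coefficient, nonzero by the inductive hypothesis, times a basis element of a free module, hence nonzero. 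With that repair the argument is complete.
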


For the case $k=d$, we consider $\rr^0 = \{0\}$.  The proof of \cref{thm:topo} uses a topological invariant composed of a sum of degrees of associated maps on spheres.  It is also a consequence of the computations of Fadell and Husseini on $T(k)$-spaces \cite{Fadell:1988tm} and can be proved by elementary homotopy arguments \cites{Mus12, Manta2021}.  \cref{thm:topo} has applications to other mass partition problems \cites{Manta2021, Soberon2021}.  We are now ready to prove \cref{fairy bread sandwich}

\begin{proof}[Proof of \cref{fairy bread sandwich}]
First, let us give an overview of the proof.  We will parametrize the set of possible choices for $S_{k-1},\ldots, S_{d-1}$ using the Stiefel manifold $V_{d-k+1}(\rr^d)$ . Then we construct a continuous $((\zz_2)^{d-k+1})$-equivariant map $F_{\pi}:V_{d-k+1}(\rr^d) \rightarrow \rr^{d-1}\times \ldots \times \rr^{k-1}$ determined by the measures $\mu_{d-1},\ldots,\mu_{k-1}$, the families $\mathcal{F}_{d-1},\ldots,\mathcal{F}_{k-1}$ and the permutation $\pi$. The zeroes of this map correspond to choices for $S_{k-1},\ldots, S_{d-1}$ that split all of the measures and the functions in each family exactly in half, finishing the proof.

Given a permutation $\pi=(\pi_{d-1},\ldots,\pi_{k-1})$ of $(d-1,\ldots,k-1)$ and an element $v=(v_{d-1},\ldots,v_{k-1}) \in V_{d-k+1}(\rr^d)$. We obtain the corresponding $S_{k-1},\ldots, S_{d-1}$ as follows:

\begin{itemize}
    \item First, we define $S_{d} = \rr^d$.
    
    \item For $i\in \{d-1,\ldots, k-1\}$, assume $S_{i+1}$ has been constructed.  We pick $S_i$, the hyperplane of $S_{i+1}$ perpendicular to $v_{\pi_i}$ that splits the measure $\mu_{i}^{S_{i+1}}$ in half.  If there are multiple options for the location of $S_i$ under these rules, we pick $S_i$ to be the midpoint of all possibilities.
\end{itemize}

\begin{figure}
       \begin{center}
       \includegraphics[width=0.9\textwidth]
        {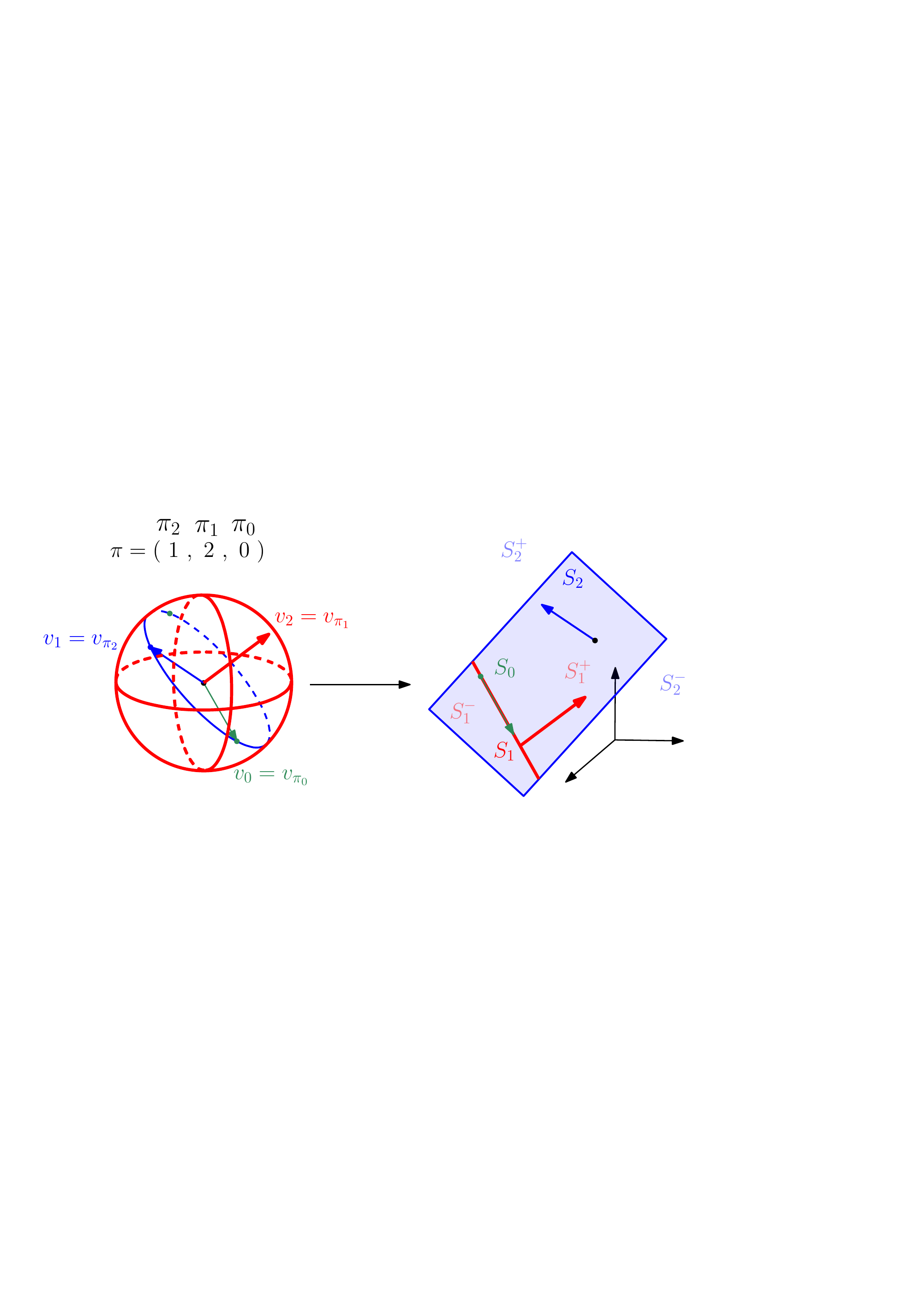}\\[0.3cm]
        $\displaystyle F_{\pi}(v_2,v_1,v_0) = \left( \textcolor{red}{\begin{bmatrix}
        f_{1,1}(S_1^+)-f_{1,1}(S_1^-) \\
        f_{1,2}(S_1^+)-f_{1,2}(S_1^-)
        \end{bmatrix}}, \textcolor{blue}{[f_{2,1}(S_2^+) - f_{2,1}(S_2^-)]}, \textcolor{OliveGreen}{0} \right).$
        \caption{ Example of how $S_2,S_1,S_0$ and value of $F_\pi$ are obtained from the element $(v_2,v_1,v_0) \in V_3(\rr^3)$ and the permutation $\pi=(1,2,0)$.  The permutation tells us which element of $(v_2,v_1,v_0)$ gives us which subspace in the full flag.  
        In this case $S_2$ halves one measure and one function in $\rr^d$, $S_1$ halves one measure and two functions in $S_2$, and $S_0$ only halves one measure in $S_1$.\label{fig:pickingSs}}
        \end{center}
        \end{figure}

Let $S_{i}^+ = \{s_i+av_{\pi_i}:s_i\in S_i, a \geq 0\} \subset S_{i+1}$ and $S_{i}^- = \{s_i-av_{\pi_i}:s_i\in S_i, a \geq 0\}\subset S_{i+1}$
be the complimentary half-spaces of  $S_{i+1}$ bounded by $S_{i}$.

We define the map

\begin{align*}
    F_{\pi} : V_{d-k+1}(\rr^d) & \to \rr^{d-1} \times \ldots \times \rr^{k-1} \\
    (v_{d-1},\ldots, v_{k-1}) & \mapsto (x_{d-1},\ldots, x_{k-1})
\end{align*}

by setting $x_{\pi_i, j}$, the $j$-th coordinate of $x_{\pi_i}$ as

\begin{equation}
    x_{\pi_i,j}=f_{i,j}(S_{i}^+)-f_{i,j}(S_{i}^-).
\end{equation}

We note that $F_{\pi}$ is continuous.  The flag $(S_{k-1},\ldots, S_{d-1})$ does not change if we change the sign of any $v_i$.  If we change the sign of $v_{\pi_i}$, all the half-spaces defined remain the same except for $S_{i}^+$ and $S_{i}^-$, which are flipped.  This causes only $x_{\pi_i}$ to change signs, so $F_{\pi}$ is $((\zz_2)^{d-k+1})$-equivariant.

By \cref{thm:topo}, the map $F_{\pi}$ has a zero.  This zero corresponds to a choice of $S_{k-1} \subset \ldots \subset S_{d-1}$ that also splits all functions in each family in half simultaneously. Note that $x_{\pi_i}$ has $\pi_i$ coordinates, which corresponds to $S_{i}$ bisecting the $\pi_i$ functions in $\mathcal{F}_i$ in $S_{i+1}$.
\end{proof}

\begin{conjecture}
\label{optimality}
Let $k \le d$ be non-negative integers.  For any permutation $\pi$ of $(d-1,\ldots, k-1)$, \cref{fairy bread sandwich} is optimal.
\end{conjecture}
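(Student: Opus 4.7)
The plan is to combine a dimension count with an explicit construction of tight examples. For the trivial permutation $\pi=(d-1,\ldots, k-1)$ the inductive ham sandwich argument already works, so the task is to handle an arbitrary $\pi$. The first thing I would verify is that the dimension count remains tight: the source of $F_\pi$ is the Stiefel manifold $V_{d-k+1}(\rr^d)$, whose dimension is $(d-1)+(d-2)+\cdots+(k-1) = \sum_{i=k-1}^{d-1} i$, and the $d-k+1$ translation parameters of the flag are absorbed by the requirement that $S_i$ bisect $\mu_i$ on $S_{i+1}$. Hence the effective parameter space available for halving functions has dimension $\sum_{i=k-1}^{d-1} i = \sum_i \pi_i$, which matches the target dimension of $F_\pi$. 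Replacing any $\pi_{i^*}$ by $\pi_{i^*}+1$ increases the target by one and formally overdetermines the system.

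Next I would turn this heuristic into a genuine counterexample. The plan is to take each $\mu_i$ and each element of $\mathcal{F}_i$ to be a smoothing of a finite sum of Dirac masses, with the point positions chosen from a dense open subset of configurations. For such a generic choice one expects $F_\pi$ to be transverse to $0$, so that its zero set consists of finitely many isolated flags, with the total $(\zz_2)^{d-k+1}$-equivariant count being odd by the obstruction underlying \cref{thm:topo}. Given such a configuration, I would then add one extra function to some $\mathcal{F}_{i^*}$ and perturb it so that the corresponding new coordinate of the augmented map is bounded away from zero at each of the finitely many zeros of the original $F_\pi$. The resulting tuple of measures and functions would witness that $\pi_{i^*}$ cannot be improved to $\pi_{i^*}+1$, proving \cref{optimality}.

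The main obstacle is the transversality step. The map $F_\pi$ depends on the permutation through the recursive way $S_i$ is chosen inside $S_{i+1}$, so a tuple of measures that is generic for one permutation need not be generic for another. I would attempt a Sard-type argument on the space of smoothed Dirac mass assignments, aiming to show that a residual subset produces only isolated, transverse zeros of $F_\pi$. The difficulty is that intermediate hyperplane cuts are no longer unique (unlike in the ham sandwich uniqueness used for the trivial permutation), so one must track several branches at each level and argue that transversality holds along each; distinct branches could conceivably collide or create positive-dimensional components of the zero set that would defeat the perturbation step. Controlling this uniformly across all permutations, and in particular when the permutation forces a high-dimensional cut to be made last, is what makes \cref{optimality} resist a direct inductive proof.
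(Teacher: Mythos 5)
Note first that \cref{optimality} is stated in the paper as a \emph{conjecture}, not a theorem: the paper provides no proof of it, so there is nothing to compare your argument against. The only supporting evidence offered is \cref{not over d}, which treats the extreme case in which a single level would have to bisect more than $d$ objects. That lemma is an elementary geometric construction (balls whose centers affinely span $\rr^d$, so their projections onto any $S_k$ can never lie in a common hyperplane of $S_k$); it says nothing about the general case, where each level carries at most $d$ objects but the total count is one too large.

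Your proposal attacks the full conjecture through a dimension count plus a generic-perturbation scheme. The dimension count is correct ($\dim V_{d-k+1}(\rr^d) = \sum_{i=k-1}^{d-1} i$ matches the target dimension of $F_\pi$), and the overall structure is sound, since the flag $(S_{k-1},\ldots,S_{d-1})$ in the paper's construction depends only on the Stiefel parameter and the measures $\mu_i$, so the zero set of $F_\pi$ is indeed the locus of candidate flags. But, as you acknowledge, the transversality step is the crux, and it is a genuine gap rather than a routine technicality. The map $F_\pi$ is only continuous, not smooth: the ``midpoint of all possible halving hyperplanes'' convention and the recursive implicit-function construction of each $S_i$ inside $S_{i+1}$ can produce corners, so Sard's theorem does not apply directly to $F_\pi$; one would need to smooth the whole construction first and control the resulting error uniformly. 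One must also rule out positive-dimensional components of $F_\pi^{-1}(0)$ for every permutation $\pi$, and account for flags that bisect everything while some intermediate halving hyperplane is \emph{not} the median, since such flags are valid but are invisible to the fixed map $F_\pi$. None of these issues looks insurmountable for generic smooth data, but none is established, which is consistent with the statement still being open. Your write-up is a reasonable program, not a proof.
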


In the specific case where some affine subspace is required to bisect more than $d-1$ assignments of continuous functions, this is quickly apparent. We show examples where all functions are induced by mass assignments.

\begin{lemma}
\label{not over d}
Let $k,d$ be non-negative integers such that $k\leq d$. There exist $d+1$ $k$-dimensional mass assignment $\mu_1,\ldots, \mu_{d+1}$ in $\rr^d$ such that there does not exist any $k$-dimensional affine subspace $S_k$ of $\rr^d$ in which $\mu^{S_k}_1,\ldots, \mu^{S_k}_{d+1}$ can be simultaneously bisected.
\end{lemma}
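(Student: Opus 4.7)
The plan is to construct the $d+1$ mass assignments as unit-mass isotropic Gaussians centered at the orthogonal projections of $d+1$ suitably chosen points, so that bisection by a hyperplane of $S_k$ would force the hyperplane to contain all projections, which is impossible because the projections affinely span $S_k$.

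First I would fix the points $q_1 = 0$ and $q_{i+1} = e_i$ for $i = 1, \ldots, d$, where $e_1, \ldots, e_d$ is the canonical basis of $\rr^d$. For each $S_k \in A_k(\rr^d)$ I let $\pi_{S_k} \colon \rr^d \to S_k$ denote orthogonal projection and define $\mu_i^{S_k}$ to be the isotropic Gaussian measure on $S_k$ of total mass $1$ centered at $\pi_{S_k}(q_i)$. Both $\pi_{S_k}(q_i)$ and the corresponding density on $S_k$ depend continuously on $S_k$, so each $\mu_i$ is a valid $k$-dimensional mass assignment whose restriction to every $S_k$ is absolutely continuous with respect to Lebesgue measure on $S_k$.

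Next I would use radial symmetry: for any hyperplane $H \subset S_k$, the two half-spaces of $S_k$ bounded by $H$ have equal Gaussian measure if and only if $H$ contains the center $\pi_{S_k}(q_i)$. Hence any hyperplane of $S_k$ that bisects every $\mu_i^{S_k}$ must contain all $d+1$ projections $\pi_{S_k}(q_1), \ldots, \pi_{S_k}(q_{d+1})$. Writing $S_k = V_k + t$ with $V_k$ its $k$-dimensional linear direction, the $d$ difference vectors $\pi_{S_k}(q_{i+1}) - \pi_{S_k}(q_1) = \pi_{V_k}(e_i)$ span $V_k$, because the orthogonal projection $\pi_{V_k}$ is surjective and $e_1, \ldots, e_d$ spans $\rr^d$. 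Therefore the $d+1$ projections affinely span the $k$-dimensional space $S_k$, so no $(k-1)$-dimensional affine subspace $H$ can contain them all, and no bisecting hyperplane exists.

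There is no serious obstacle in this argument: the main content is the observation that a Gaussian is bisected only by hyperplanes through its center, together with the surjectivity of orthogonal projection from $\rr^d$ onto any $k$-dimensional direction $V_k$. Any family of radially symmetric absolutely continuous densities would serve equally well in place of Gaussians. The only minor care needed is to verify that the assignment $S_k \mapsto \mu_i^{S_k}$ is continuous in the weak topology, which is immediate because the density is a continuous function of the center $\pi_{S_k}(q_i)$, and this center depends continuously on $S_k \in A_k(\rr^d)$.
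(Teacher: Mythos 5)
Your proof is correct and follows essentially the same strategy as the paper's: choose $d+1$ affinely spanning points (the paper takes them in general position, you take $0, e_1, \ldots, e_d$), attach to each a centrally symmetric mass assignment with center at the orthogonal projection of the point (the paper projects a uniform ball, you use an isotropic Gaussian), observe that a bisecting hyperplane of $S_k$ must pass through every center, and note that the projected points affinely span $S_k$. The only differences are cosmetic choices of symmetric density and concrete points.
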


\begin{proof}
Let $p_1, \ldots, p_{d+1}$ be points in general position in $\rr^d$, so that no hyperplane contains all of them.  For each $p_i$ let $\mu_i$ be the mass assignment induced by the projection of a uniform measure in the unit ball centered at $p_i$ onto a subspace.  Given affine subspaces $S_{k-1} \subset S_k$ of dimensions $k-1$ and $k$, respectively, we know that $S_{k-1}$ halves $\mu_i^{S_k}$ if and only if it contains the projection of $p_i$ onto $S_k$.

The points $p_1, \ldots, p_{d+1}$ affinely span $\rr^d$. Thus, for any affine subspace $S_k$ of $\rr^d$, the projections of $p_1, \ldots, p_{d+1}$ onto $S_k$ affinely span $S_k$. There is then no hyperplane $S_{k-1}$ of $S_k$ containing the projections of all the centers, so $\mu_1^{S_k},\ldots,\mu_{d+1}^{S_k}$ cannot all be simultaneously bisected.
\end{proof}

As a quick application of \cref{fairy bread sandwich}, we prove \cref{thm:rotation-case}.  Recall that an affine subspace of $\rr^d$ is called $k$-vertical if it contains rays in the directions $e_d, e_{d-1}, \ldots, e_{d-k+1}$.

\begin{proof}[Proof of \cref{thm:rotation-case}]

To apply \cref{fairy bread sandwich}, we construct some $(i+1)$-dimensional mass assignments for $i=k,\ldots, d-1$.  For $k\le i < d$, we construct $k$ of these assignments, denoted $\mu_{i,j}$ of $0 \le j < k$.  Let $x_0$ be the origin and for $0 < j < k$ let $x_j=e_{d+1-j}$, the $(d+1-j)$-th element of the canonical base of $\rr^d$. Let $D_{x_j}$ be the unit $d$-dimensional ball centered at $x_j$.

We define $\mu_{i,j}$ by setting $\mu_{i,j}^{S_{i+1}}(A) = \vol_{i+1}(A \cap D_{x_j})$ for any all measurable sets $A \subset S_{i+1}$.  We now induct on $d-i$ to show that each successive $S_i$ for $k \le i < d$ must be $(k-1)$-vertical.  See \cref{fig:rotation-case}.

In the case where $i=d-1$, a hyperplane $S_{d-1}$ bisects the measures $\mu_{d-1,j}$ if and only if it contains $x_j$.  This implies that $S_{d-1}$ goes through the origin and is $(k-1)$-vertical.  Now, assuming $S_{i+1}$ is a $(k-1)$-vertical subspace of dimension $i+1$, the intersection $D_{x_j} \cap S_{i+1}$ is a unit ball of dimension $i+1$ centered at the point $x_j$. Thus a hyperplane $S_{i}$ of $S_{i+1}$ halves $\mu_{i,j}^{S_{i+1}}$ if and only if $x_j \in S_i$. Again, this implies that $S_i$ goes through the origin and is $(k-1)$-vertical.

We apply \cref{fairy bread sandwich} with the permutation $(d-2,d-3,\ldots, k-1, d)$ of $(d-1,d-2,\ldots, k-1)$, using only mass assignments.  Since we have fewer mass assignments than required, there exists a full flag $S_{k-1} \subset \ldots \subset S_{d-1}$ where $S_i$ bisects all measures or functions of $S_{i+1}$ for $i=k-1,\ldots,d-1$.  Since $S_k$ must be a $(k-1)$-vertical subspace of dimension $k$, we obtain the desired conclusion.
\end{proof}

\begin{figure}[h]
       \begin{center}
       \includegraphics[width=.4\textwidth]{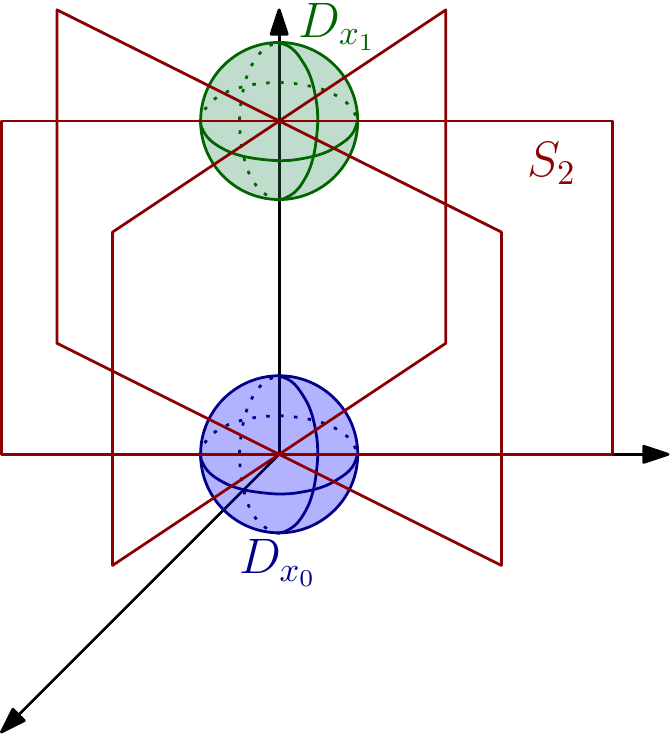}
        \caption{\label{fig:rotation-case} Illustration of the proof of \cref{thm:rotation-case} for $d=3$: Any affine subspace $S_2 \subset \rr^3$ that simultaneously bisect the balls $D_{x_0}$ and $D_{x_1}$ must contain their centers $x_0$ and $x_1$ and therefore the whole $z$ axis.} 
        \end{center}
\end{figure}

Here we used measures and assignments of continuous functions induced by intersections of spheres with $k$-dimensional subspaces to impose conditions on $S_{k},\ldots,S_{d-1}$.  It seems that different geometric constraints could be obtained by modifying the mass assignments.

\begin{problem}
Determine what geometric conditions on $S_k,\ldots, S_{d-1}$ we can impose by choosing other measures and assignment of functions in the proof of \cref{thm:rotation-case}.
\end{problem}

\section{Central transversals to mass assignments}\label{sec:central-transversal}

Schnider proved that if we are given $d+\lambda-k+1$ mass assignments on the $k$-dimensional linear subspaces of $\rr^d$, we can find one $k$-dimensional subspace where all the mass assignments have a common central $\lambda$-transversal \cite{Schnider:2020kk}*{Thm. 4.3}.  His results focus on $k$-dimensional linear subspaces of $\rr^d$, so we also impose that condition in this section to make our results directly comparable.  We extend his result and show that we can also require that the $k$-dimensional linear subspace we find is $\lambda$-vertical.

Recall that given a finite measure $\mu$ in a $k$-dimensional space $S_k$, a central $\lambda$-transversal is an affine $\lambda$-dimensional subspace such that every half-space of $S_k$ that contains the $\lambda$-transversal has measure at least $\mu(S_k)/(k+1-\lambda)$.

\begin{proof}[Proof of \cref{thm:center-transversal-vertical}]
	Let $V_{d-\lambda}(\rr^d)$ be the Stiefel manifold of orthonormal $(d-\lambda)$-frames in $\rr^d$.  We denote $(v_1, \ldots, v_{d-\lambda})$ an element of $V_{d-\lambda}(\rr^d)$.
	
	We denote a few key spaces.  For $i=1,\ldots, d-k$, let
	\[
	S_{d-i} = \left(\operatorname{span}\{v_{k-\lambda+1}, \ldots, v_{k-\lambda+i}\}\right)^{\perp}.
	\]
	Notice that $S_k \subset S_{k+1} \subset \ldots \subset S_{d-1}$, each goes through the origin, and that the dimension of $S_i$ is $i$ for $i=k, \ldots, d-1$.  We also consider $M_{k-\lambda}=\operatorname{span}\{v_1, \ldots, v_{k-\lambda}\}\subset S_k$.
	
	The subspace $S_k$ is our candidate for the $k$-dimensional linear space we look for, and $M_{k-\lambda}$ is our candidate for the orthogonal complement to $L$ in $S_k$.  The subspaces $S_{k+1},\ldots, S_{d-1}$ will be used to guarantee that $S_k$ is $\lambda$-vertical.  To do this, we construct an appropriate $(\zz_2)^{d-\lambda}$-equivariant function
	\begin{align*}
	f:V_{d-\lambda}(\rr^d) & \to \rr^{d-1} \times \rr^{d-2} \times \ldots \times \rr^{\lambda}.	 \\
	(v_1, \ldots, v_{d-\lambda}) &\mapsto (x_1, x_2, \ldots, x_{d-\lambda}).
	\end{align*}
	
	Notice that $x_i \in \rr^{d-i}$ for each $i$.  First, we define the vectors $x_{1}, \ldots, x_{k-\lambda}$.  For $j =1,\ldots, d-k+\lambda+1$, let $\sigma_j$ be the orthogonal projection of $\mu_j^{S_k}$ onto $M_{\lambda-k}$.  We know that the set of centerpoints of $\sigma_j$ is a convex compact subset of $M_{\lambda-k}$, so we can denote by $p_j$ its barycenter.  The affine subspace
	\[
	p_j + (\operatorname{span}\{v_{1}, \ldots, v_{d-\lambda}\})^{\perp} \subset S_k
	\]
	is a $\lambda$-transversal to $\mu_j^{S_k}$, see \cref{fig:center-transversal3d}.
	
	\begin{figure}[h]
       \begin{center}
       \includegraphics[width=\textwidth]
        {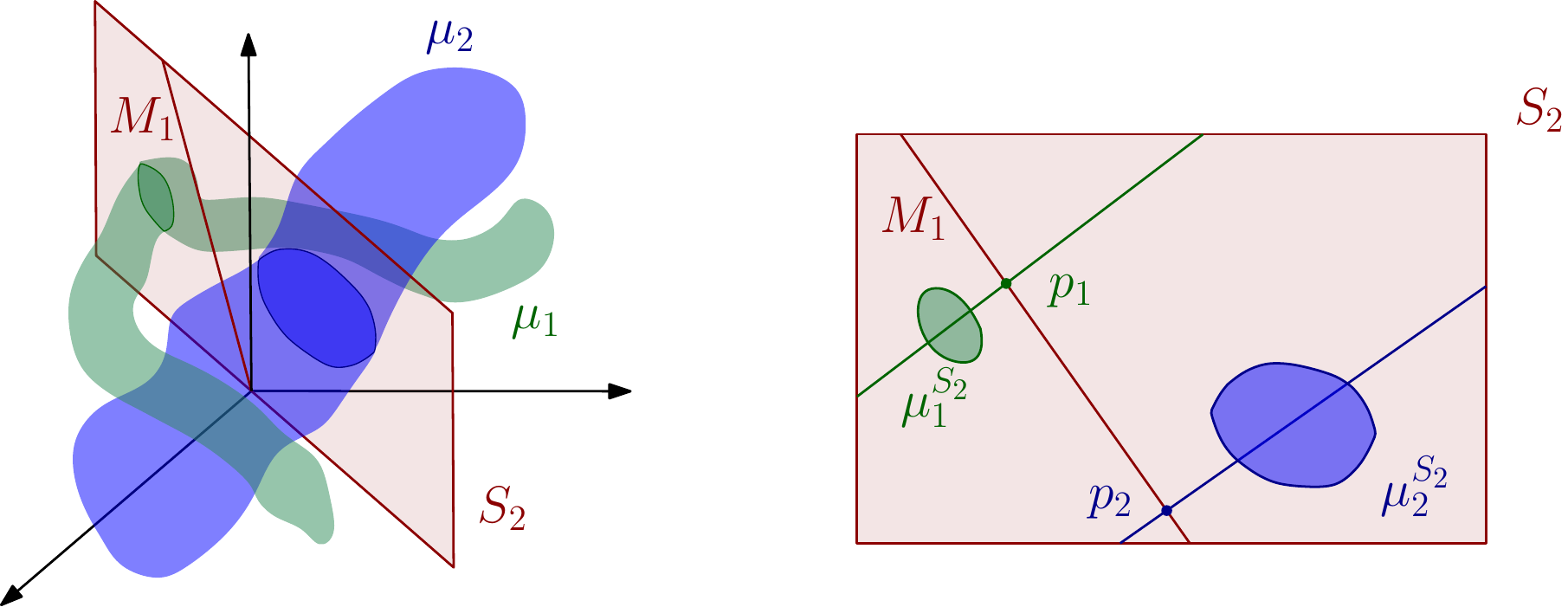}
        \caption{An illustration of the case $(d,k,\lambda) = (3,2,1)$.  The mass assignments $\mu_1, \mu_2$ are induced by the intersection of each plane in $\rr^3$ with the two shapes in the figure.\label{fig:center-transversal3d} } 
        \end{center}
        \end{figure}
	
	We need $p_1, \ldots, p_{d-k+\lambda+1}$ to coincide to guarantee a common $\lambda$-transversal in $S_k$ for all our mass assignments.  For $i=1, \ldots, k-\lambda$ and $j=1,\ldots, d-k+\lambda$, we define the $j$-th coordinate of $x_i$ to be the dot product 
	\[
	\langle v_i, p_j - p_{d-k+\lambda+1}\rangle.
	\]
	The rest of the coordinates of each $x_i$ are zero.  If $x_1, \ldots, x_{k-\lambda}$ are all zero vectors this would imply that for every $j=1,\ldots, d-k+\lambda$ we have
	\begin{align*}
		p_j = \sum_{i=1}^{k-\lambda}\langle v_i, p_j\rangle v_i =  \sum_{i=1}^{k-\lambda}\langle v_i, p_{d-k+\lambda+1}\rangle v_i = p_{d-k+\lambda+1},
	\end{align*}
	as we wanted.  Changing the sign of any of $v_i$ for $i=1,\ldots, k-\lambda$ does not change $S_k$, $M_{k-\lambda}$, nor any of the points $p_j$, and simply flips the sign of $x_i$.
	
	Now let us define $x_{k-\lambda+1}, \ldots, x_{d-\lambda}$.  We denote
	\begin{align*}
		S_{d-i}^+ & = \{y \in S_{d-i+1}: \langle v_{k-\lambda+i}, y \rangle \ge 0 \}, \\
		S_{d-i}^- & = \{y \in S_{d-i+1}: \langle v_{k-\lambda+i}, y \rangle \le 0 \}.
	\end{align*}
	
	For $i=1$ we consider $S_d = \rr^d$.  We also consider, as in the proof of Theorem \ref{thm:rotation-case}, the $\lambda$ mass assignments $\tau_{d-i,1},\ldots, \tau_{d-i,\lambda}$ where $\tau_{d-i,j}$ is a $(d-i)$-dimensional mass assignment such that for any measurable set $C \subset S_{d-i}$ we have
	\[
	\tau^{S_{d-i}}_{d-i,j} (C) = \vol_{d-i}(C \cap D_{e_{d+1-j}}).
	\]
	
	For $i=1, \ldots, d-k$, we define $x_{k-\lambda+i}$ as a vector in $\rr^{d-k+\lambda-i}$ whose first $\lambda$ coordinates are 
	\[
	\tau^{V_{d+1-i}}_{d+1-i,j} (S_{d-i}^+)-\tau^{V_{d+1-i}}_{d+1-i,j} (S_{d-i}^-)
	\]
	 for $j=1,\ldots, \lambda$.  The rest of the coordinates of each $x_i$ are defined to be zero.
	 
	 	\begin{figure}[h]
       \begin{center}
       \includegraphics[width=.7\textwidth]
        {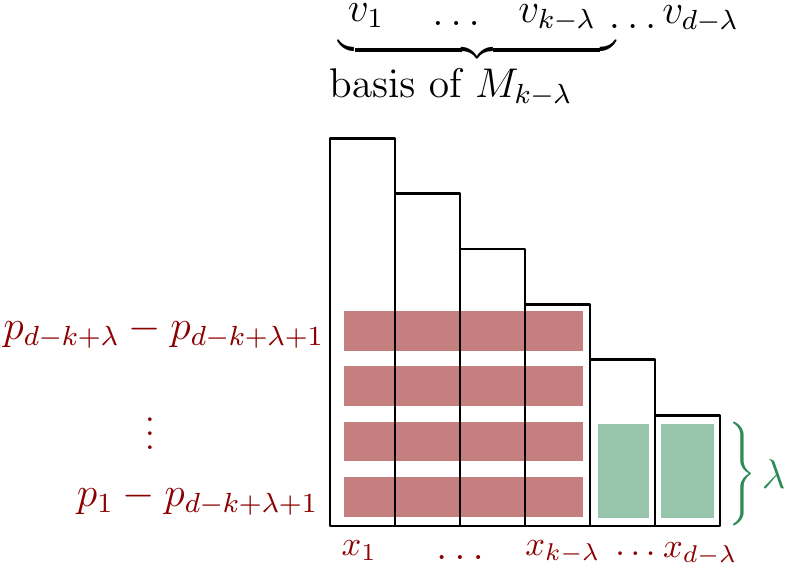}
        \caption{A visual representation of the map $f$ from the proof of \cref{thm:center-transversal-vertical}.  The first block, of width $k-\lambda$, guarantees that the points $p_1,\ldots, p_{d-k\lambda+1}$.  The second block, of height $\lambda$, guarantees that $S_{k}$ is $\lambda$-vertical.\label{fig:centertransversalmap} } 
        \end{center}
        \end{figure}
	
  The same inductive argument as in the proof of Theorem \ref{thm:rotation-case} shows that if each of $x_{k-\lambda+1}, \ldots, x_{d-\lambda}$ are zero, then each of $S_{d-1}, \ldots, S_{k}$ are $\lambda$-vertical. We do not need to use masses centered at the origin since we are only working with linear subspaces in this section.  Finally, changing the sign of $v_{k-\lambda+i}$ for $i=1,\ldots, d-k$ does not change any of the space $S_k, \ldots, S_{d-1}$ and only flips the name of $S_{d-i}^+$ and $S_{d-i}^-$.  Therefore, it only changes the sign of $x_{k-\lambda+i}$.
	
	This means that the function $f:V_{d-\lambda}(\rr^d) \to \rr^{d-1} \times \rr^{d-2} \times \ldots \times \rr^{\lambda}$ is $(\zz_2)^{d-\lambda}$-equivariant and continuous.  By \cref{thm:topo}, it must have a zero.  By construction, a zero of this function corresponds to a $\lambda$-vertical $k$-dimensional subspace $S_k$ where we can find a common $\lambda$-transversal to each mass assignment.
	\end{proof}

\section{Horizontal separating spaces}\label{sec:horizontal-separating-spaces}

A significant motivation for the study of mass partitions for mass assignments comes from the problem of splitting families of lines in $\rr^3$.  In this section, we bisect families of lines in $\rr^d$ with additional conditions.  Let $e_d \in \rr^d$ be the vertical direction.  We say that a hyperplane is vertical if it contains the direction $e_d$, and we say that an affine $(d-2)$-dimensional subspace is horizontal if it is orthogonal to $e_d$.

A family of lines in $\rr^d$ induces a mass assignment on hyperplanes in $\rr^d$, as a generic hyperplane intersects each line in a point, and this family of points can be considered as a measure.  We show that for any $d$ families of lines in $\rr^d$, there exists a vertical hyperplane $S_{d-1}$ and a horizontal $(d-2)$-dimensional affine subspace $S_{d-2} \subset S_{d-1}$ that splits each of the induced measures in $S_{d-1}$.  In the case $d=3$, this recovers the splitting result of Schnider.  However, instead of having a splitting line that intersects the $z$-axis, we get a splitting line that is perpendicular to it.

To avoid delicate issues with the continuity of the maps we define and the mass assignments, we work with measures on $A_1(\rr^d)$, the space of $1$-dimensional affine spaces of $\rr^d$.  We can consider the natural embedding $A_1(\rr^d) \hookrightarrow G_2(\rr^{d+1})$ into the Grassmannian of $2$-dimensional linear subspaces of $\rr^{d+1}$.  The manifold $G_2(\rr^{d+1})$, as a quotient space of the orthogonal group $O(d+1)\cong V_{d+1}(\rr^{d+1})$, inherits a canonical measure from the Haar measure of $O(d+1)$, which in turn induces a canonical measure in $A_1(\rr^d)$.  For simplicity, we call this the Haar measure of $A_1(\rr^d)$.  We say that a measure in $A_1(\rr^d)$ is absolutely continuous if it is absolutely continuous with respect to the Haar measure we just described.  Among other properties, this measure $\mu^*$ satisfies that for any nonzero vector $v \in \rr^d$ we have
\[
\mu^*(\{\ell \in A_1(\rr^d): \ell \perp v\})=0.
\]

Given an absolutely continuous measure $\mu$ in $A_1(\rr^d)$, this induces a mass assignment on the hyperplanes of $\rr^d$.  Given a hyperplane $H \subset \rr^d$ and a measurable set $C \subset H$, we can define
\[
\mu^H (C) = \mu \{\ell \in A_1(\rr^d) : \ell \cap H \mbox{ is a single point}, \ \ell \cap H \in C\}.
\]
In our proofs, we only evaluate these measures in closed half-spaces instead of general measurable sets $C$.  We introduce some notation to analyze the behavior of the mass assignments as our vertical hyperplanes move away from the origin. 

Given a unit vector $v \in \rr^d$ orthogonal to $e_d$ and a constant $\lambda$, consider the vertical hyperplane
\[
H_{v,\lambda} = \{p : \langle p, v \rangle = \lambda\}.
\]
Let $\ell$ be a line in $\rr^d$, not orthogonal to $v$.  If we fix $v$ and move $\lambda$ at a constant speed, the point $H_{v,\lambda} \cap \ell$ is moving at a constant speed in $H_{v,\lambda}$.  Formally, if $n(\ell)$ is a unit vector in the direction of $\ell$, then $H_{v,\lambda}$ is moving with the direction vector
\[
\frac{1}{\langle v, n(\ell)\rangle} \operatorname{proj}_{H_{v,0}}(n(\ell)).
\]

  The expression above does not depend on the choice for $n(\ell)$.  Let $z(v,\ell)$ be the $e_{d}$-component of this direction vector, which we call the vertical speed of $\ell$ in direction $v$.  For an absolutely continuous measure in $A_1(\rr^d)$ and a direction $v \perp e_d$, the set of lines with some fixed vertical speed in direction $v$ has measure zero.  Given an finite absolutely continuous measure $\mu$ in $A_1(\rr^d)$, we say that its median vertical speed for $v \perp e_d$ is the value $m = m(\mu, v)$ for which
  \[
  \mu(\{\ell \in A_1(\rr^d) : z(v,\ell) \ge m\}) = \mu(\{\ell \in A_1(\rr^d) : z(v,\ell) \le m\}).
  \]
  
  If there is a range of values that satisfy this property, we select the midpoint of this interval.  Intuitively speaking, for a finite set of lines in $\rr^d$, this parameter lets us know which line will eventually become the vertical median in $H_{v,\lambda}$ as $\lambda \to \infty$.  The advantage of working with absolutely continuous measures in $A_1(\rr^d)$ is that this parameter depends continuously on $v$.  We say that two absolutely continuous measure $\mu_1, \mu_2$ in $A_1(\rr^d)$ share a median speed in direction $v$ if $m(\mu_1, v) = m(\mu_2, v)$.  Also, we have $m(v,\mu_1) = -m(-v,\mu_1)$.  Now we are ready to state our main theorem for this section.

\begin{theorem}\label{thm:horizontal-and-lines}
	Let $d$ be a positive integer and $\mu_1, \ldots, \mu_d$ be $d$ finite absolutely continuous measures in $A_1(\rr^d)$.  Then, there either exists a direction $v \perp e_d$ in which all $d$ measures share a median speed, or there exists a vertical hyperplane $S_{d-1}$ and horizontal affine subspace $S_{d-2} \subset S_{d-1}$ such that $S_{d-2}$ bisects each of $\mu_1^{S_{d-1}}, \ldots, \mu_{d-2}^{S_{d-1}}$.
\end{theorem}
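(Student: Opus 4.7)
The plan is to parametrize the admissible flags by $S^{d-2}\times\rr\times\rr$, reduce one condition via the intermediate value theorem, and play the boundary behaviour as the hyperplane $S_{d-1}$ recedes to infinity against a Borsuk--Ulam argument on $S^{d-2}$. Throughout I read the conclusion as ``$S_{d-2}$ bisects all of $\mu_1^{S_{d-1}},\ldots,\mu_d^{S_{d-1}}$,'' which matches the dimension count $(d-2)+1+1=d$ and the introduction.

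Write a vertical hyperplane as $S_{d-1}(w,\alpha)=\{p\in\rr^d:\langle p,w\rangle=\alpha\}$ for $w\in S^{d-2}\subset e_d^\perp$ and $\alpha\in\rr$, and its horizontal cut at height $h$ as $S_{d-2}(w,\alpha,h)=S_{d-1}(w,\alpha)\cap\{p_d=h\}$. The half-spaces $S_{d-1}^\pm$ of $S_{d-1}$ on either side of $S_{d-2}$ are determined by the sign of $p_d-h$, independently of the sign chosen for $w$, so the bisection-defect map
\[
F(w,\alpha,h)=\bigl(\mu_i^{S_{d-1}}(S_{d-1}^+)-\mu_i^{S_{d-1}}(S_{d-1}^-)\bigr)_{i=1}^{d}\colon S^{d-2}\times\rr^2\to\rr^d
\]
is invariant under the $\zz_2$-action $(w,\alpha,h)\mapsto(-w,-\alpha,h)$, and any zero of $F$ is the desired flag.

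To absorb one coordinate, note that $\mu_d^{S_{d-1}}$ is atomless: the set of lines through a fixed point of $S_{d-1}$ is a lower-dimensional subset of $A_1(\rr^d)$ and hence has $\mu_d$-measure zero by absolute continuity. Thus for each $(w,\alpha)$ I define $h^*(w,\alpha)$ as the midpoint of the (nonempty, closed) interval of $h$'s where $F_d(w,\alpha,h)=0$. Standard weak-continuity arguments show $h^*$ depends continuously on $(w,\alpha)$, producing a continuous $\zz_2$-invariant map
\[
\tilde g(w,\alpha)=\bigl(F_1(w,\alpha,h^*(w,\alpha)),\ldots,F_{d-1}(w,\alpha,h^*(w,\alpha))\bigr)\colon S^{d-2}\times\rr\to\rr^{d-1},
\]
whose zeros are still exactly the flags sought.

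Now I analyze $\tilde g$ as $\alpha\to+\infty$. For a line $\ell$ with $\langle n(\ell),w\rangle\ne 0$, the point $\ell\cap S_{d-1}(w,\alpha)$ has height $\alpha\,z(w,\ell)+O(1)$, while $h^*(w,\alpha)=\alpha\,m(\mu_d,w)+o(\alpha)$. Hence on $\mu_i$-almost every line (those with $z(w,\ell)\ne m(\mu_d,w)$), the side of $S_{d-2}$ on which the intersection lies is eventually determined by $\operatorname{sign}(z(w,\ell)-m(\mu_d,w))$. Dominated convergence combined with the absolute continuity of the $\mu_i$ yields convergence $\tilde g(w,\alpha)\to G(w)$, uniform in $w$, where
\[
G_i(w)=\mu_i\{\ell : z(w,\ell)>m(\mu_d,w)\}-\mu_i\{\ell : z(w,\ell)<m(\mu_d,w)\}.
\]
Using $z(-w,\ell)=-z(w,\ell)$ and $m(\mu_d,-w)=-m(\mu_d,w)$ one checks $G(-w)=-G(w)$, and $G(w)=0$ if and only if $m(\mu_1,w)=\cdots=m(\mu_d,w)$. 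If such $w$ exists we are in the first conclusion of the theorem, so I may assume $G$ is nowhere zero on $S^{d-2}$.

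Suppose for contradiction that $\tilde g$ is also nowhere zero. Then $h_\alpha(w):=\tilde g(w,\alpha)/\|\tilde g(w,\alpha)\|$ extends continuously to a homotopy $S^{d-2}\times[0,+\infty]\to S^{d-2}$ with $h_{+\infty}=G/\|G\|$. The invariance $\tilde g(-w,0)=\tilde g(w,0)$ forces $h_0(-w)=h_0(w)$, so $h_0$ factors through $\rr P^{d-2}$ and has mod-$2$ degree zero; while $h_{+\infty}$ is odd and so has mod-$2$ degree one by the Borsuk--Ulam theorem. This contradicts homotopy invariance of the mod-$2$ degree, so $\tilde g$ and hence $F$ must vanish somewhere, producing the required flag. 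The main technical hurdle is the uniform-in-$w$ asymptotic control of $h^*$ and $\tilde g$ as $\alpha\to+\infty$ (needed so that $h_\alpha$ extends continuously to $\alpha=+\infty$); once that is in place the degree comparison is immediate.
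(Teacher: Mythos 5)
Your proposal is correct and follows essentially the same route as the paper's proof: parametrize vertical hyperplanes by a direction in $S^{d-2}$ and a (compactified) offset, use $\mu_d$ to pin down the horizontal cut, and derive a contradiction from the fact that the normalized defect map gives a homotopy on $S^{d-2}$ between an even map at offset $0$ and an odd map at offset $\infty$. The only cosmetic differences are that you introduce the redundant height coordinate $h$ before eliminating it via $h^*$, and you phrase the final obstruction in terms of mod-$2$ degree rather than the parity of the integer degree; otherwise the two arguments coincide.
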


The first case, in which all measures share their median vertical speed in some direction $v$ can be interpreted as a bisection in the limit of all measures as $S_{d-1} = H_{v,\lambda}$ and $\lambda \to \infty$.

\begin{proof}
	Consider the space $S^{d-2} \times [0,1)$.  We identify the $(d-2)$-dimensional sphere $S^{d-2}$ with the set of unit vectors in $\rr^d$ orthogonal to $e_d$.  For $(v,\tau) \in S^{d-2} \times [0,1)$ define $\lambda = \tau / (1-\tau)$ and the vertical hyperplane $S_{d-1}=H_{v,\lambda}$.  Let $S_{d-2}$ be the horizontal affine subspace of $S_{d-1}$ that halves $\mu^{S_{d-1}}_{d}$.  If there is an interval of $e_d$-coordinates to choose $S_{d-2}$ from, we select the median.  The $S_{d-2}^+$ be the set of points in $S_{d-1}$ above or on $S_{d-2}$ and $S_{d-2}^-$ be the set of points in $S_{d-1}$ below or on $S_{d-2}$.
	
	We now construct the function
	\begin{align*}
	f:S^{d-2} \times [0,1) & \to \rr^{d-1} \\
		(v, \tau) & \mapsto \Big(\mu_1^{S_{d-1}}(S_{d-2}^+) - \mu_1^{S_{d-1}}(S_{d-2}^-), \ldots, \mu_{d-1}^{S_{d-1}}(S_{d-2}^+) - \mu_{d-1}^{S_{d-1}}(S_{d-2}^-)\Big)
	\end{align*}
	
	Let us analyze this function as $\tau \to 1$, which is equivalent to $\lambda \to +\infty$.  If we consider $\lambda$ to be increasing at a constant rate, then $S_{d-2}$ is moving vertically at a velocity of $m(v, \mu_2)$.  Therefore, the $i$-th component of $f$ converges to
	\[
	\mu_i (\{\ell \in A_1(\rr^d): z(v,\ell) \ge m(v, \mu_d)\}-\mu_i (\{\ell \in A_1(\rr^d): z(v,\ell) \le m(v, \mu_d)\}.
	\]
	This quantity is zero if and only if $\mu_i$ and $\mu_d$ share a median speed in direction $v$ and it depends continuously on $v$.  Therefore, we can extend this $f$ to a continuous function $\tilde{f}:S^{d-2}\times [0,1]\to \rr^{d-1}$.  If $\tilde{f} (v,\tau) = 0$ for some $(v,\tau) \in S^{d-2} \times [0,1]$, we have two cases.  If $\tau <1$, then the corresponding $S_{d-2}$ halves each of the measures $\mu_1^{S_{d-1}}, \ldots, \mu_d^{S_{d-1}}$ and we are done.  If $\tau = 1$, then all of $\mu_1, \ldots, \mu_d$ share a median speed in direction $v$, and we are done.
	
	Assume for the sake of a contradiction that $\tilde{f}$ has no zero.  We can reduce the dimension of the image by considering
	\begin{align*}
		g: S^{d-2}\times [0,1] & \to S^{d-2} \\
		(v,\tau) & \mapsto \frac{1}{\|\tilde{f}(v,\tau)\|}\tilde{f}(v,\tau),
	\end{align*}
	which is a continuous function.
	
	For each $\tau \in [0,1]$ we define the map $g_{\tau}: S^{d-2} \to S^{d-2}$ where $g_{\tau}(v) = g(v,\tau)$.  By construction $H_{v,0} = H_{-v,0}$, so $g_0(v) = g_0(-v)$.  In the other extreme, we have the function $g_1: S^{d-2} \to S^{d-2}$.  As $z(v,\ell) = -z(-v,\ell)$ and $m(\mu_d,v) = -m(\mu_d, -v)$ for all $v \in S^{d-2}, \ell \in A_1(\rr^d)$, we have $g_1(v) = -g_1(-v)$ (intuitively, as we change from $v$ to $-v$ everything but the vertical direction flips, so the sign of $g_1$ changes).  Therefore, $g$ would be a homotopy between an even map of $S^{d-2}$ and an odd map of $S^{d-2}$, which is impossible.
	
\end{proof}

\begin{figure}[h!]
       \begin{center}
       \includegraphics[width=0.9\textwidth]
        {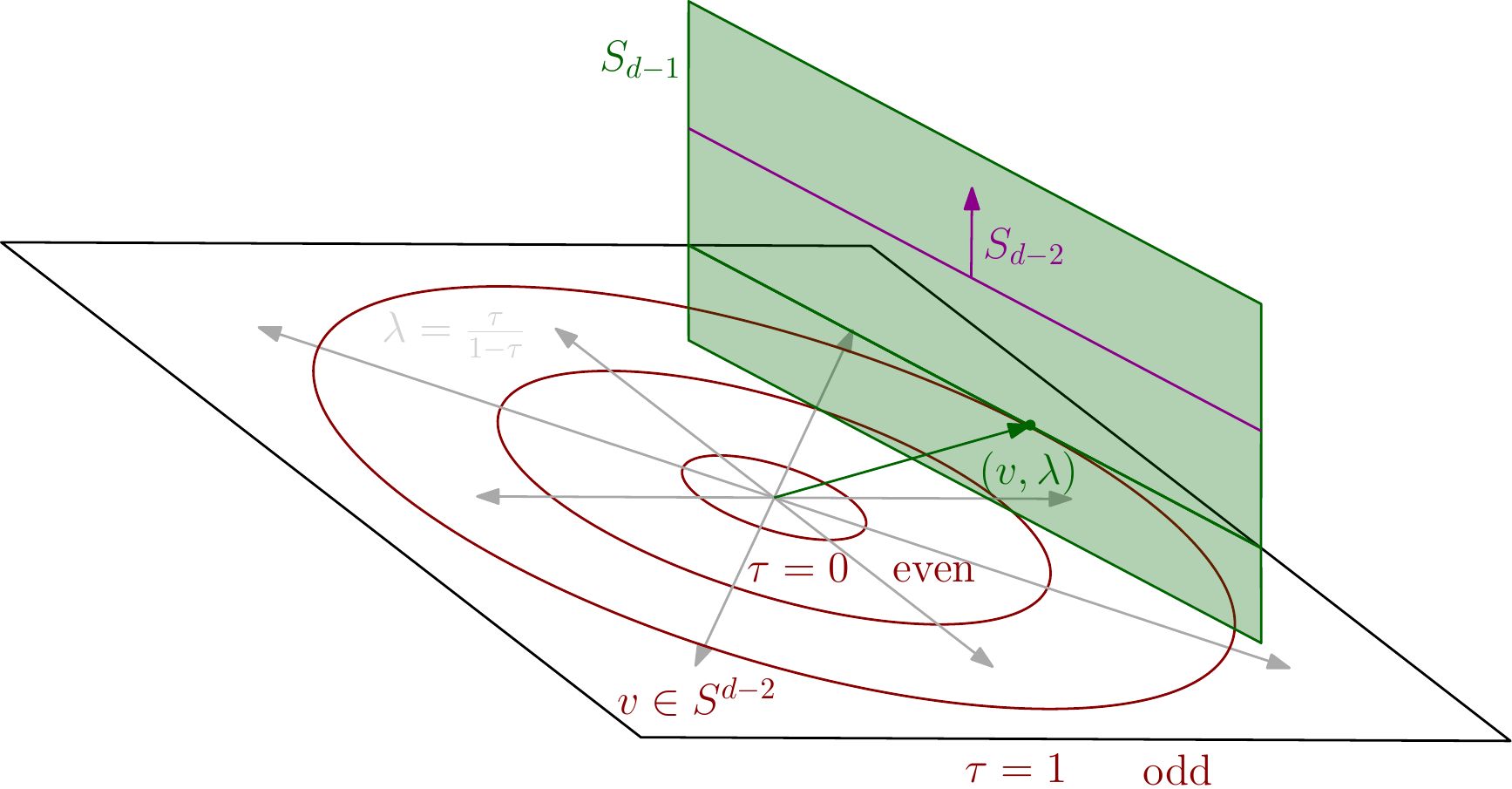}
        \caption{Once we choose $(v,\lambda)$ the hyperplane $S_{d-1}$ is determined.  We then choose $S_{d-2} \subset S_{d-1}$ at the appropriate height so it halves $\mu^{S_{d-1}}_{d}$.\label{fig:horizontal} } 
        \end{center}
        \end{figure}

\section{Dynamic ham sandwich theorems}\label{sec:dynamic-ham-sandwich}

In this section, we are concerned by mass assignments induced by families of affine $(d-k)$-spaces in $\rr^d$, as we seek a translate $S_k$ of a particular space of dimension $k$ in which we can split the measures induced on $S_k$.  We aim to bisect simultaneously $d$ mass assignments.  Let us start with $k=d-1$.  As described in the introduction, this problem can be considered as a dynamic ham sandwich problem in $\rr^d$.

We first describe a continuous version of this theorem.  We increase the dimension by one for convenience.  Consider $\rr^d$ embedded in $\rr^{d+1}$ as the set of points $x$ for which the last coordinate is zero, so $\langle x, e_{d+1}\rangle = 0$.  Given an absolutely continuous measure $\mu_i$ on $A_1(\rr^{d+1})$ we denote my $\mu_{i,t}$ the measure it induces on the hyperplane defined by $\langle x, e_{d+1}\rangle=t$.  We refer to this as a \textit{moving measure} in $\rr^d$, where we think of the variable $t$ as the time.  Each $\ell \in A_1(\rr^{d+1})$ not parallel to $e_{d+1}$ corresponds to a pair $(p,w) \in \rr^d \times \rr^d$.  At time $t$ it is represented by the point $p+tw$.  Therefore, a moving measure can be thought of as a measure on $\rr^d \times \rr^d$.   The extension to absolutely continuous moving measures is to make the topological maps we construct continuous.  If $v$ is a unit vector in $\rr^d$, we say that the speed of $(p,w)$ in the direction of $v$ is $\langle v, w\rangle$.

Given a moving measure $\mu$ in $\rr^d$ and a unit vector $v \in \rr^d$, we call the \textit{median speed} of $\mu$ in the direction $v$ as the value $m$ for which

\[
\mu\{(p,w) \in \rr^d \times \rr^d: \langle w , v \rangle \le m \} = \mu\{(p,w) \in \rr^d \times \rr^d: \langle w , v \rangle = m \}
\]

\begin{theorem}\label{thm:translation-strange}
	Let $d$ be a positive odd integer.  Let $\mu_0, \mu_1, \ldots, \mu_{d}$ be $d+1$ finite absolutely continuous moving measures on $\rr^d$.  Then, there either exists a direction $v$ in which all measures have the same median speed or there exists a time $t\in(-\infty,\infty)$ and a hyperplane $S_{d-1} \subset \rr^d$ so that its closed half-spaces $S^+_{d-1}, S^-_{d-1}$ satisfy
	\[
	\mu_{i,t} (S^+_{d-1}) = \mu_{i,t} (S^-_{d-1}) \qquad \mbox{ for each }i=0,1,\ldots, d.
	\]
\end{theorem}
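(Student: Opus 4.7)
The plan is to parametrize candidate bisectors by $S^{d-1}\times\mathbb{R}$, where a pair $(v,t)$ selects the hyperplane $S_{d-1}(v,t)$ perpendicular to $v$ at time $t$ that bisects $\mu_{0,t}$ (using the midpoint convention if the bisector is not unique). With $v$ pointing into $S_{d-1}^+(v,t)$, define
\[
f(v,t) = \bigl(\mu_{i,t}(S_{d-1}^+(v,t)) - \mu_{i,t}(S_{d-1}^-(v,t))\bigr)_{i=1}^{d}\in\rr^d.
\]
Replacing $v$ by $-v$ preserves the hyperplane but swaps the half-spaces, so $f(-v,t)=-f(v,t)$. Any zero of $f$ produces the desired time and bisecting hyperplane, so the whole argument reduces to finding a zero of a suitable extension of $f$.

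To work on a compact space, I would compactify $\mathbb{R}$ to $[-1,1]$, sending $\pm 1$ to $\pm\infty$. Because a line represented by $(p,w)$ is at position $p+tw$ at time $t$, the bisecting displacement $\lambda(v,t)$ for $\mu_{0,t}$ in direction $v$ grows like $t\cdot m(\mu_0,v)+o(t)$. Consequently, as $t\to +\infty$ the $i$-th coordinate of $f(v,t)$ converges to
\[
\mu_i\{(p,w):\langle w,v\rangle>m(\mu_0,v)\}-\mu_i\{(p,w):\langle w,v\rangle<m(\mu_0,v)\},
\]
which vanishes exactly when $m(\mu_i,v)=m(\mu_0,v)$. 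The analogous limit at $t\to-\infty$ gives the opposite sign. Thus $f$ extends continuously to $\tilde f:S^{d-1}\times[-1,1]\to\rr^d$ with $\tilde f(v,1)=-\tilde f(v,-1)$, and any zero of $\tilde f$ on the boundary $t\in\{\pm 1\}$ exhibits a direction in which all of $\mu_0,\ldots,\mu_d$ share a median speed. So it suffices to show that $\tilde f$ has a zero somewhere in $S^{d-1}\times[-1,1]$.

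Suppose $\tilde f$ has no zero. Normalize to get $g:S^{d-1}\times[-1,1]\to S^{d-1}$, $g(v,t)=\tilde f(v,t)/\|\tilde f(v,t)\|$. The antipodal symmetry of $f$ gives that each slice $g_t(v):=g(v,t)$ is odd, hence by the Borsuk--Ulam theorem has odd degree, and in particular $\deg(g_{-1})\neq 0$. The identity $\tilde f(v,1)=-\tilde f(v,-1)$ says $g_1=A\circ g_{-1}$, where $A$ is the antipodal map of $S^{d-1}$; since $\deg A=(-1)^d$, we get $\deg g_1=(-1)^d\deg g_{-1}$. Yet $g$ is a homotopy between $g_{-1}$ and $g_1$, forcing $\deg g_1=\deg g_{-1}$. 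For $d$ odd these combine to give $\deg g_{-1}=0$, a contradiction. The parity hypothesis is used only here.

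The main technical obstacle is establishing the continuous extension at $t=\pm\infty$. This requires controlling $\lambda(v,t)-t\,m(\mu_0,v)$ uniformly in $v$ and showing that for any direction $v$, outside an arbitrarily small neighborhood of the set $\{(p,w):\langle w,v\rangle=m(\mu_0,v)\}$, the side of the bisecting hyperplane on which $(p,w)$ lies stabilizes as $|t|\to\infty$. Absolute continuity of $\mu_0,\ldots,\mu_d$ on $\rr^d\times\rr^d$, together with the midpoint convention, is what makes both $\lambda(v,t)$ and $m(\mu_i,v)$ continuous in $v$ and ensures the above boundary sets have measure zero, so the limit and degree computation go through.
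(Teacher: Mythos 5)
Your proposal is correct and follows essentially the same approach as the paper: parametrize by $S^{d-1}\times[-1,1]$ after compactifying time, define the same map $f$ via the $\mu_0$-bisecting hyperplane, extend continuously to the boundary using median speeds, and derive a contradiction from the degree of the normalized map via the oddness of the slices and the relation $\tilde f(v,1)=-\tilde f(v,-1)$, which is where the parity of $d$ enters.
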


\begin{proof}
	  We parametrize our partitions with pairs $(v,\tau) \in S^{d-1}\times (-1, 1)$.  Let $S_{d-1}$ be the hyperplane orthogonal to $v$ that halves $\mu_{0,t}$ for $t = \tau / (1-|\tau|)$.  As usual, if there is a range of possibilities for $S_{d-1}$ we choose the median.  We denote by $S^{+}_{d-1}$ the closed half-space in the direction of $v$ and $S^{-}_{d-1}$ the closed half-space in the direction of $-v$.
	
	Now we can construct a function
	\begin{align*}
		f: S^{d-1}\times (-1, 1) & \to \rr^d \\
		(v,\tau) & \mapsto \Big(\mu_{1,t}(S^{+}_{d-1}) - \mu_{1,t}(S^{-}_{d-1}), \ldots, \mu_{d,t}(S^{+}_{d-1}) - \mu_{d,t}(S^{-}_{d-1})\Big)
	\end{align*}
	
	This function is continuous.  If $v$ is fixed and $\tau \to 1$, then $f(v,\tau)$ converges.  For example, the first coordinate converges to $\mu_1(A)-\mu_1(B)$, where $A$ is the set of moving points whose speed in the direction $v$ is larger than or equal to the median speed in the direction $v$ for $\mu_0$, and $B$ is the complement of $A$.  The same convergence holds when $\tau \to -1$.  Therefore, we can extend $f$ to a continuous function $\tilde{f}: S^{d-1} \times [-1,1] \to \rr^d$.
	
	If we show that $\tilde{f}$ has a zero, we have our desired conclusion.  Since the values at of $\tilde{f}(v,1)$ and $\tilde{f}(v,-1)$ only depend on the median speeds described above, for all $v\in S^{d-1}$ we have $f(v,-1) = -f(v,1)$.  Also, by the definition of $f$ we know that if we fix $\tau$ and flip $v$ to $-v$, the hyperplane $S_{d-1}$ does not change but $S^{+}_{d-1}$ and $S^{-1}_{d-1}$ swap.  Therefore, for every $\tau$ we have $\tilde{f}(v,\tau)=-\tilde{f}(-v,\tau)$.
	
	Assume $\tilde{f}$ does not have a zero, and we look for a contradiction.  We can reduce the dimension of the image by considering
	\begin{align*}
		g: S^{d-1}\times[-1,1] & \to S^{d-1} \\
		(v,\tau) & \mapsto \frac{1}{\|\tilde{f}(v,\tau)\|}\tilde{f}(v,\tau)
	\end{align*}
	
	As before, $g(v,-1) = -g(v,1)$ for every $v \in S^{d-1}$ and $g(v,\tau)=-g(-v,\tau)$ for every $(v,\tau) \in S^{d-1}\times [-1,1]$.  Additionally, $g$ is continuous.  If we fix $\tau$, we get continuous functions on the sphere.
	
\begin{align*}
	g_{\tau}: S^{d-1} & \to S^{d-1} \\
	v & \mapsto g(v,\tau).
\end{align*}

Let us analyze the degree $g_{-1}$ and $g_1$.  As $g_{\tau}(v) = -g_{\tau}(-v)$, both maps are of odd degree.  As $g_{-1}(v) = -g_1(v)$, we have $\deg (g_{-1}) = (-1)^d \deg (g_1) \neq \deg (g_1)$, since $d$ is odd and $\deg (g_1) \neq 0$.  Therefore, it is impossible to have a homotopy between these two maps, which is the contradiction we wanted.
\end{proof}

The condition on the parity of $d$ may seem like an artifact of the proof, but we can see in \cref{fig:dynamic-example} that three moving measures in $\rr^2$ concentrated around the moving points depicted will not have a common halving line at any point in time.

The example can be extended to high dimensions.  We embed $\rr^d$ into $\rr^{d+1}$ as as the set of points whose coordinates add to $1$.  We can consider $d+1$ moving measures in $\rr^{d}$ by taking the moving pairs $(e_i, e_{i+1}-e{i}) \in \rr^{d+1} \times \rr^{d+1}$, where we consider $e_{d+2} = e_1$.  Note that these points always lie on the embedding of $\rr^d$ in $\rr^{d+1}$.  To show that there is never a hyperplane of $\rr^d$ going through all the points, we have to check that, for even $d$, the $(d+1) \times (d+1)$ matrix
\[
A=\begin{bmatrix}
1-t & 0 & 0 &\ldots & 0 & t \\
t & 1-t & 0 &\ldots & 0 & 0 \\
0 & t & 1-t & \ldots & 0 & 0 \\
\vdots & \vdots & \vdots & \ddots & \vdots & \vdots \\
0 & 0 & 0 & \ldots & 1-t & 0 \\
0 & 0 & 0 & \ldots & t & 1-t
\end{bmatrix}.
\]
is non-singular for all values of $t$.  A simple pattern chasing or row expansion shows that $\det (A) = (1-t)^{d+1}+(-1)^d t^{d+1}$.  For $t\in [0,1]$ and even $d$, $\det(A) > 0$.  For $t \not\in [0,1]$, we have $|1-t| \neq |t|$, so $\det(A) \neq 0$.

\begin{figure}
	\centerline{\includegraphics{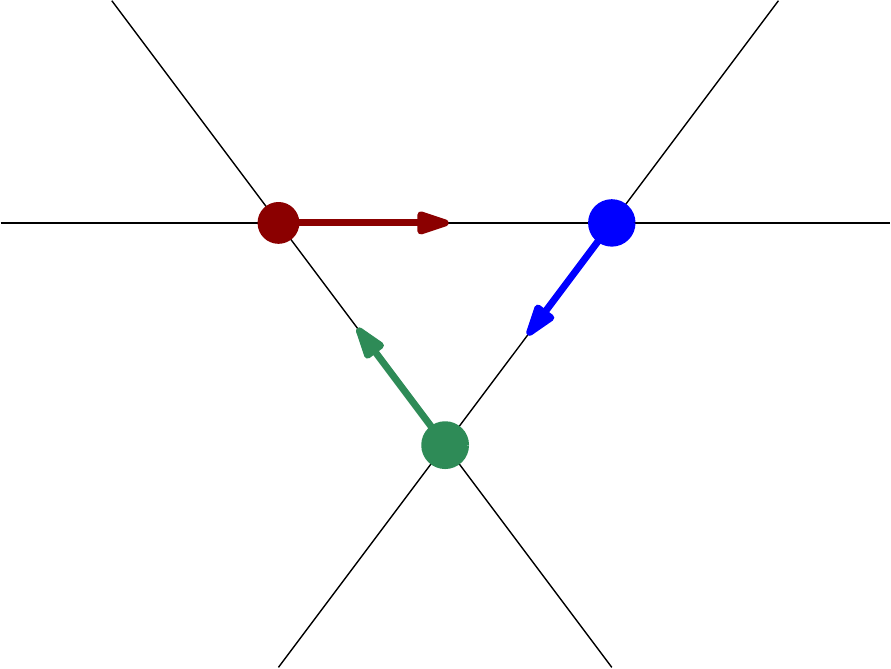}}
	\caption{Three moving points in the plane such that at no point in time there is a line that simultaneously goes through all three.}
	\label{fig:dynamic-example}
\end{figure}

Now we work with the case $k=1$ for the problem stated at the start of this section.  Given $d$ families of hyperplanes in $\rr^d$, we look for a translate of a particular line in which we can find a point that halves all of them.  For this problem, the parity of $d$ is not important.

We assume without loss of generality that the line $\ell$ we will translate is $\operatorname{span}(e_{d})$.  Assume $H$ is a hyperplane with a normal vector $n$ so that $\langle n, e_{d} \rangle \neq 0$.  This guarantees that $H$ intersects every translation of $\ell$ at a single point $p$.  If $v$ is a unit vector orthogonal to $e_{d}$, as we translate $\ell$ in the direction $v$, the point $\ell \cap H$ moves vertically at a speed $m(H,v)=- \langle n, v \rangle / \langle n , e_{d} \rangle$.  As expected, the vertical speeds in direction $v$ and in direction $-v$ are negative of each other.

As in \cref{sec:horizontal-separating-spaces}, we can define an absolutely continuous measure on $A_{d-1}(\rr^d)$ using the Haar measure on $G_d(\rr^{d+1})$.  This case is perhaps more intuitive, as $G_d(\rr^{d+1})$ is a $d$-dimensional projective space.  The canonical measure on this projective space comes from the Haar measure on its double cover, $S^d$.   If $\mu$ is an absolutely continuous measure on $A_{d-1}(\rr^{d})$, the set of hyperplanes whose normal vectors are orthogonal to $e_{d}$ has measure zero.

Given an absolutely continuous measure $\mu$ in $A_{d-1}(\rr^d)$ and a line $\ell$ in $\rr^d$, we can define a measure $\mu^{\ell}$ on $\ell$ by considering
\[
\mu^{\ell}(C) = \mu\{H \in A_{d-1}(\rr^d): H \cap \ell \mbox{ is a point and }H \cap \ell \in C\}.
\]
We define the \textit{median speed} of $\mu$ in the direction $v$ as the number $m$ such that
\[
\mu\{H \in A_{d-1}(\rr^d) : m(H,v) \le m\} = \mu\{H \in A_{d-1}(\rr^d) : m(H,v) \ge m\}.
\]

\begin{theorem}\label{thm:translated-line}
	Let $d$ be a positive integer.  Let $\mu_1, \ldots, \mu_{d}$ be finite absolutely continuous measures on $A_{d-1}(\rr^d)$.  Then, there exists a either exist a direction $v \perp e_d$ in which all measures have the same median vertical speed or there translate $\ell$ of $\operatorname{span}(e_{d})$ and a point that splits $\ell$ into two complementary closed half-lines $A$, $B$ such that
	
	\[
	\mu^\ell_{i} (A) = \mu^{\ell}_{i} (B) \qquad \mbox{ for each }i=0,1,\ldots, d.
	\]
\end{theorem}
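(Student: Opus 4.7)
The plan parallels the proofs of Theorems \ref{thm:horizontal-and-lines} and \ref{thm:translation-strange}: parametrize the configurations by a suitable compactified manifold, reduce to the nonexistence of a zero of a certain equivariant map, and derive a contradiction via a degree argument. Assume $d\ge 2$; for $d=1$ the statement is immediate from the intermediate value theorem. I parametrize candidate configurations by $(v,\tau)\in S^{d-2}\times[0,1]$, identifying $S^{d-2}$ with the unit vectors in $\rr^d$ orthogonal to $e_d$. For $\tau<1$, set $\lambda=\tau/(1-\tau)$, let $\ell=\operatorname{span}(e_d)+\lambda v$, and let $p(v,\tau)\in\ell$ be the median bisector of $\mu_d^\ell$; denote by $A,B$ the closed rays of $\ell$ above and below $p$ in the directions $\pm e_d$. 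Define
\[
f:S^{d-2}\times[0,1)\to\rr^{d-1},\qquad f(v,\tau)=\bigl(\mu_i^\ell(A)-\mu_i^\ell(B)\bigr)_{i=1}^{d-1},
\]
so that a zero of $f$ produces a translate $\ell$ and a point on it that simultaneously bisects all of $\mu_1,\ldots,\mu_d$.

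To extend $f$ to the boundary $\tau=1$, observe that as $\lambda\to\infty$ the intersection of each hyperplane $H$ with $\ell$ drifts vertically at speed $m(H,v)$ while $p$ drifts at $m(\mu_d,v)$, so each coordinate of $f$ converges to
\[
F_i(v)=\mu_i\{H:m(H,v)>m(\mu_d,v)\}-\mu_i\{H:m(H,v)<m(\mu_d,v)\}.
\]
Absolute continuity of each $\mu_i$ with respect to the Haar measure on $A_{d-1}(\rr^d)$ renders every level set $\{H:m(H,v)=c\}$ null, so each $F_i$ is continuous in $v$ and the limit is uniform on $S^{d-2}$, giving a continuous extension $\tilde f:S^{d-2}\times[0,1]\to\rr^{d-1}$. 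A zero of $\tilde f$ with $\tau<1$ gives the desired bisecting configuration, while a zero with $\tau=1$ forces $m(\mu_i,v)=m(\mu_d,v)$ for all $i$, the common median-speed alternative.

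If $\tilde f$ has no zero, normalize $g=\tilde f/\|\tilde f\|:S^{d-2}\times[0,1]\to S^{d-2}$ and inspect the two boundary slices. At $\tau=0$ the data $\ell,p,A,B$ are independent of $v$, so $g_0$ is constant and has degree $0$. At $\tau=1$, the identities $m(H,-v)=-m(H,v)$ and $m(\mu_d,-v)=-m(\mu_d,v)$ swap the two sets in $F_i$, giving $\tilde f(-v,1)=-\tilde f(v,1)$; hence $g_1$ is antipodal and has odd degree by Borsuk's theorem. A degree-$0$ and an odd-degree self-map of $S^{d-2}$ cannot be homotopic (for $d=2$, directly: $g_0$ is constant on the two points of $S^0$ while $g_1$ swaps them), contradicting the existence of the homotopy $g$. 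The main obstacle I expect is precisely the continuity of $\tilde f$ at $\tau=1$: both the continuity of the median-choice point $p(v,\tau)$ as $\lambda\to\infty$ and the uniformity of the limit $f(v,\tau)\to F(v)$ rest on the absolute continuity hypothesis, which is what forbids a positive mass concentrating on hyperplanes of a prescribed vertical speed.
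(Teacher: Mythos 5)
Your proposal follows essentially the same route as the paper's own proof: the same configuration space $S^{d-2}\times[0,1]$ (with the same reparametrization $\lambda=\tau/(1-\tau)$), the same map $f$ halving $\mu_d^\ell$ first and recording the imbalances of $\mu_1,\ldots,\mu_{d-1}$, the same continuous extension $\tilde f$ via median vertical speeds, and the same degree contradiction between the constant slice $g_0$ and the antipodal slice $g_1$. The only additions are the explicit low-dimensional checks ($d=1$, $d=2$) and a more careful remark on uniform convergence at $\tau=1$, which are sound refinements rather than a different approach.
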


\begin{proof}
Our configuration space this time is $S^{d-2} \times [0,1)$.  We consider $S^{d-2}$ the set of unit vectors in $\rr^d$ orthogonal to $e_{d}$.  For $(v,\tau) \in S^{d-2} \times [0,1)$, let $\ell$ be the translate of $\operatorname{span}(e_d)$ by the vector $(\tau/(1-\tau)) v$.  We consider $h$ to be the height of the median of $\mu_d^{\ell}$ and $A$ the set of points on $\ell$ at height $h$ or more, and $B$ the set of points of $\ell$ at height $h$ or less.

Now we define
\begin{align*}
    f: S^{d-2} \times [0,1) & \to \rr^{d-1} \\
    (v,\tau) & \mapsto (\mu_1^\ell (A) - \mu_1^\ell (B),\ldots, \mu_{d-1}^\ell (A) - \mu_{d-1}^\ell (B))
\end{align*}

For a fixed $v$, the point $f(v,\tau)$ converges as $\tau \to 1$.  This is precisely because $\mu_1^{\ell}(A)$ converges to the measure $\mu_1$ of the subset of hyperplanes in $A_{d-1}(\rr^d)$ whose speed in the direction $v$ is at least the median vertical speed of $\mu_d$ in the direction $v$.  Therefore, we can extend $f$ to a continuous function $\tilde{f}: S^{d-2} \times [0,1] \to \rr^{d-1}$.  If $\tilde{f}$ has a zero, we obtain the desired conclusion.  Otherwise, we can reduce the dimension of the image and construct our final continuous map
\begin{align*}
    g: S^{d-2} \times [0,1] & \to S^{d-2} \\
    (v,\tau) & \mapsto \frac{1}{\|\tilde{f}(v,\tau)\|}f(v,\tau)
\end{align*}

The map $g_1: S^{d-2} \to S^{d-2}$, defined by $g_1(v) = g(v,1)$, is odd.  This is because the speed of a hyperplane in direction $v$ is the negative of its speed in direction $-v$.  Therefore, the degree of $g_1$ is odd.  On the other hand, $g_0: S^{d-2} \to S^{d-2}$, defined by $g_0(v) = g(v,0)$, is a constant map, so its degree is zero.  There cannot be a homotopy between $g_1$ and $g_0$.
\end{proof}

\begin{bibdiv}
\begin{biblist}

\bib{Akopyan:2018tr}{article}{
      author={Akopyan, Arseniy},
      author={Avvakumov, Sergey},
      author={Karasev, Roman},
       title={{Convex fair partitions into an arbitrary number of pieces}},
        date={2018},
     journal={arXiv preprint arXiv:1804.03057},
      volume={math.MG},
}

\bib{Blagojevic2018}{article}{
      author={Blagojevi{\'c}, Pavle V.~M.},
      author={Blagojevi{\'c}, Aleksandra~Dimitrijevi{\'c}},
      author={Karasev, Roman},
      author={Kliem, Jonathan},
       title={{More bisections by hyperplane arrangements}},
        date={2018},
     journal={arXiv preprint arXiv:1809.05364},
      volume={math.MG},
}

\bib{Blagojevic:2018jc}{article}{
      author={Blagojevi{\'c}, Pavle V.~M.},
      author={Frick, Florian},
      author={Haase, Albert},
      author={Ziegler, G{\"u}nter~M.},
       title={{Topology of the Gr{\"u}nbaum–Hadwiger–Ramos hyperplane mass
  partition problem}},
        date={2018},
     journal={Trans. Amer. Math. Soc.},
      volume={370},
      number={10},
       pages={6795\ndash 6824},
  url={https://www.ams.org/journals/tran/2018-370-10/S0002-9947-2018-07528-6/S0002-9947-2018-07528-6.pdf},
}

\bib{Blagojevic:2019hh}{article}{
      author={Blagojevi{\'c}, Pavle V.~M.},
      author={Pali{\'c}, Nevena},
      author={Sober{\'o}n, Pablo},
      author={Ziegler, G{\"u}nter~M.},
       title={{Cutting a part from many measures}},
        date={2019},
     journal={Forum Math. Sigma},
      volume={7},
       pages={e37},
}

\bib{Blagojevic:2014ey}{article}{
      author={Blagojević, Pavle V.~M.},
      author={Ziegler, G\"unter~M.},
       title={{Convex equipartitions via Equivariant Obstruction Theory}},
        date={2014},
     journal={Israel J. Math.},
      volume={200},
      number={1},
       pages={49\ndash 77},
}

\bib{Chan2020}{article}{
      author={Chan, Yu~Hin},
      author={Chen, Shujian},
      author={Frick, Florian},
      author={Hull, J.~Tristan},
       title={{Borsuk-Ulam theorems for products of spheres and Stiefel
  manifolds revisited}},
        date={2020},
     journal={Topol. Methods Nonlinear Anal.},
      volume={55},
      number={2},
       pages={553\ndash 564},
}

\bib{Dzedzej1999}{article}{
      author={Dzedzej, Zdzis{\l}aw},
      author={Idzik, Adam},
      author={Izydorek, Marek},
       title={{Borsuk-Ulam type theorems on product spaces II}},
        date={1999},
     journal={Topol. Methods Nonlinear Anal.},
      volume={14},
       pages={345\ndash 352},
}

\bib{Dolnikov:1992ut}{article}{
      author={Dol'nikov, Vladimir~L.},
       title={{A generalization of the ham sandwich theorem}},
        date={1992},
        ISSN={0001-4346},
     journal={Math. Notes},
      volume={52},
      number={2},
       pages={771\ndash 779},
}

\bib{Fadell:1988tm}{article}{
      author={Fadell, Edward},
      author={Husseini, Sufian},
       title={{An ideal-valued cohomological index theory with applications to
  Borsuk—Ulam and Bourgin—Yang theorems}},
        date={1988},
     journal={Ergodic Theory Dynam. Systems},
      volume={8},
       pages={73\ndash 85},
}

\bib{Hubard2020}{article}{
      author={Hubard, Alfredo},
      author={Karasev, Roman},
       title={Bisecting measures with hyperplane arrangements},
        date={2020},
     journal={Math. Proc. Cambridge Philos. Soc.},
      volume={169},
      number={3},
       pages={639\ndash 647},
}

\bib{Karasev:2014gi}{article}{
      author={Karasev, Roman~N.},
      author={Hubard, Alfredo},
      author={Aronov, Boris},
       title={{Convex equipartitions: the spicy chicken theorem}},
        date={2014},
     journal={Geom. Dedicata},
      volume={170},
      number={1},
       pages={263\ndash 279},
}

\bib{matousek2003using}{book}{
      author={Matou\v{s}ek, Ji\v{r}\'{\i}},
       title={Using the {B}orsuk-{U}lam theorem: Lectures on topological
  methods in combinatorics and geometry},
      series={Universitext},
   publisher={Springer-Verlag, Berlin},
        date={2003},
        ISBN={3-540-00362-2},
}

\bib{ManiLevitska2006}{article}{
      author={Mani-Levitska, Peter},
      author={Vre\'{c}ica, Sini\v{s}a},
      author={\v{Z}ivaljevi\'{c}, Rade},
       title={Topology and combinatorics of partitions of masses by
  hyperplanes},
        date={2006},
        ISSN={0001-8708},
     journal={Adv. Math.},
      volume={207},
      number={1},
       pages={266\ndash 296},
         url={https://doi.org/10.1016/j.aim.2005.11.013},
}

\bib{Manta2021}{article}{
      author={Manta, Michael~N.},
      author={Sober{\'o}n, Pablo},
       title={{Generalizations of the Yao--Yao partition theorem and the
  central transversal theorem}},
        date={2021},
     journal={arXiv preprint arXiv:2107.06233},
      volume={math.CO},
}

\bib{Mus12}{article}{
      author={Musin, Oleg},
       title={{Borsuk--Ulam type theorems for manifolds}},
        date={2012},
     journal={Proc. Amer. Math. Soc.},
      volume={140},
      number={7},
       pages={2551\ndash 2560},
}

\bib{Pilz2021}{article}{
      author={Pilz, Alexander},
      author={Schnider, Patrick},
       title={Bisecting three classes of lines},
        date={2021},
        ISSN={0925-7721},
     journal={Comput. Geom.},
      volume={98},
       pages={Paper No. 101775, 11 pages.},
         url={https://doi.org/10.1016/j.comgeo.2021.101775},
}

\bib{Rado:1946ud}{article}{
      author={Rado, Richard},
       title={{A Theorem on General Measure}},
        date={1946},
        ISSN={0024-6107},
     journal={J. Lond. Math. Soc.},
      volume={s1-21},
      number={4},
       pages={291\ndash 300},
}

\bib{RoldanPensado2021}{article}{
      author={R{old{\'a}n-Pensado}, Edgardo},
      author={Sober{\'o}n, Pablo},
       title={A survey of mass partitions},
        date={2021},
     journal={Bull. Amer. Math. Soc.},
        note={Electronically published on February 24, 2021, DOI:
  https://doi.org/10.1090/bull/1725 (to appear in print).},
}

\bib{Schnider:2020kk}{article}{
      author={Schnider, Patrick},
       title={{Ham-Sandwich Cuts and Center Transversals in Subspaces}},
        date={2020},
     journal={Discrete Comput. Geom.},
      volume={98},
      number={4},
       pages={623},
}

\bib{Soberon:2012kp}{article}{
      author={Sober{\'o}n, Pablo},
       title={{Balanced Convex Partitions of Measures in $R^d$}},
        date={2012},
     journal={Mathematika},
      volume={58},
      number={01},
       pages={71\ndash 76},
}

\bib{Soberon2021}{article}{
      author={Sober{\'o}n, Pablo},
      author={Takahashi, Yuki},
       title={Lifting methods in mass partition problems},
        date={2021},
     journal={arXiv preprint arXiv:2109.03749},
      volume={math.CO},
}

\bib{Steinhaus1938}{article}{
      author={Steinhaus, Hugo},
       title={A note on the ham sandwich theorem},
        date={1938},
     journal={Mathesis Polska},
      volume={9},
       pages={26\ndash 28},
}

\bib{Zivaljevic2017}{incollection}{
      author={{\v{Z}}ivaljevi{\'c}, Rade~T.},
       title={Topological methods in discrete geometry},
        date={2017},
   booktitle={{Handbook of Discrete and Computational Geometry}},
     edition={Third},
   publisher={CRC Press},
       pages={551\ndash 580},
}

\bib{Zivaljevic1990}{article}{
      author={{\v{Z}}ivaljevi{\'c}, Rade~T.},
      author={Vre{\'c}ica, Sini{\v{s}}a~T.},
       title={{An Extension of the Ham Sandwich Theorem}},
        date={1990},
     journal={Bull. Lond. Math. Soc.},
      volume={22},
      number={2},
       pages={183\ndash 186}
}

\end{biblist}
\end{bibdiv}

\end{document}